\newtheorem{theorem}{Theorem}
\newtheorem{corollary}{Corollary}[theorem]
\newtheorem{lemma}[theorem]{Lemma}
\newtheorem{proposition}{Proposition}
\title{The Distribution of Integral Points on the Wonderful Compactification by Height}
\author{Dylon Chow}
\begin{document}
\maketitle

\begin{abstract}
We study the asymptotic distribution of $S$-integral points of bounded height on partial bi-equivariant compactifications of semi-simple groups of adjoint type.
\end{abstract}

\section{Introduction.}

\subsection{Rational Points by Height}

Let $X \subset \mathbb{P}^n$ be a smooth projective variety over a number field $F$. Fix a height function

$$H:\mathbb{P}^n(F) \rightarrow \mathbb{R}_{>0}$$

and consider the counting function

$$N(X,B)=\# \{x \in X(F):H(x) \leq B\}.$$

For certain varieties $X$, Manin's conjecture and its refinements predict precise asymptotic formulas for $N(X^\circ,B)$ as $B \rightarrow \infty$, where $X^\circ \subset X$ is an appropriate Zariski open subset of $X$. For an overview of progress in the case of surfaces, see ... When the variety is a bi-equivariant compactification, approaches from dynamics and harmonic analysis on adele groups can be used. For example, the papers \cite{JAMS} and \cite{GMO} prove Manin's conjecture for wonderful compactifications of semisimple groups.

\subsection{Integral points.}

Let $X$ be a smooth projective variety over a number field $F$, and let $D \subset X$ be a divisor with strict normal crossings. Choose models $\mathcal{X}, \mathcal{D}$ of $X,D$ over the ring of integers $\mathcal{O}_F$ of $F$. A rational point $x \in X(F)$ gives rise to a section $\pi_x$:Spec$(\mathcal{O}_F) \rightarrow \mathcal{X}$ of the structure morphism. Choose a finite set of places $S$ of $\mathcal{O}_F$ that contains the archimedean places. A $(\mathcal{D},S)$-integral point is a section $\pi_x$ such that for all $v \notin S$ one has

$$\pi_{x,v} \cap \mathcal{D}_v = \emptyset,$$

i.e. $\pi_x$ avoids $\mathcal{D}$ over Spec$(\mathcal{O}_F) \backslash S$. 

Let $K_X$ denote the canonical divisor on $X$. Assume that $-(K_X+D)$ is ample and that $(\mathcal{D},S)$-integral points are Zariski dense. For a Zariski open $X^\circ \subset X$, let

$$N(X^\circ,-(K_X+D),B)=\#\{x|H_{-(K_X+D)} \leq B\}$$

be the counting function on the set of $(\mathcal{D},S)$-integral points in $X^\circ(F)$ with respect to $-(K_X+D)$. An asymptotic formula for the number of integral points by height has been established for toric varieties (\cite{inttoric}) and compactifications of vector groups (see \cite{integralvector}). The proofs use harmonic analysis on adele groups. In these cases, the underlying groups are abelian, and the harmonic analysis reduces to the classical theory of Fourier analysis on locally compact abelian groups. In \cite{TTB} an asymptotic formula was established when $X$ is the wonderful compactification of a split adjoint group.

In this paper we remove the split condition and establish analytic properties of height zeta functions for integral points on partial equivariant compactifications of semisimple adjoint groups. To state our result, let $G$ be a semisimple adjoint group of rank at least 2 and let $X$ be the wonderful compactification of $G$. Assume that $S$ and $D$ are such that the set of $(D,S)$-integral points is Zariski dense. Let $T$ be a maximal split torus of $G$ and let $E$ be a number field over which $T$ splits.

Chambert-Loir and Tschinkel (see \cite{CLT}) defined, for each place $v \in S$, a simplicial complex $\mathcal{C}^{an}_{F_v}(D)$, which is called the analytic Clemens complex. It encodes the incidence properties of the $v$-adic manifolds given by the irreducible components of $D$. Its dimension is equal to the maximal number of irreducible components of $D$ defined over the local field $F_v$ whose intersection has $F_v$-points, minus one.




    
    

For

$$\lambda=\sum_{\alpha \in \Delta(G_E,T_E)}\lambda_\alpha \alpha$$

in Eff$^0(X)$, the interior of the effective cone of $X$, put

$$a(\lambda)=\max(\max_{\alpha \in \mathcal{A}_D}\dfrac{\kappa_\alpha}{\lambda_\alpha},\max_{\alpha \notin \mathcal{A}_D}\dfrac{\kappa_\alpha+1}{\lambda_\alpha}).$$ Here $\kappa_\alpha$ is defined as in section 2.3.




Let $d_v(\lambda)$ be the dimension of the simplicial complex $\mathcal{C}^{\text{an}}_{F_v,\lambda}(X \backslash D)$ obtained from $\mathcal{C}^{\text{an}}_{F_v}(X \backslash D)$ by removing all faces containing a vertex such that $\kappa_\alpha < a(\lambda)\lambda_\alpha$.

Let

$$b(\lambda)=\text{rk} \ \text{Pic}(X \backslash D)+\sum_{v \in S} d_v(\lambda)$$

and

\begin{center}
    $$c=\lim_{s \rightarrow a(\lambda)}\dfrac{(s-a(\lambda))^{b(\lambda)}}{a(\lambda)(b(\lambda)-1)!} \cdot |\chi_{S,D,\lambda}(G)| \cdot \int_{G(\mathbb{A})^{\text{Ker}_\lambda}}\delta_{S,D}(g)H(s\lambda,g)^{-1} \ dg>0.$$
\end{center}

Here $|\chi_{S,D,\lambda}(G)|$ is the finite number of automorphic characters that contribute to the right-most pole.

\begin{theorem}
 With the notation above, the number of $(S,D)$-integral points of bounded height on $X$ with respect to $\lambda$ is asymptotic to 

$$c B^{a(\lambda)}\log(B)^{b(\lambda)-1}(1+o(1)), \ \ \ \ \ \ B \rightarrow \infty,$$
\end{theorem}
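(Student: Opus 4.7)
The plan is to analyze the height zeta function
\[ Z(s) = \sum_{x \in G(F)} \delta_{S,D}(x)\, H(s\lambda, x)^{-1}, \]
where $\delta_{S,D}$ is the characteristic function of the $(D,S)$-integral adelic locus (a product of local factors, $K_v$-bi-invariant for almost all $v$). A multi-variable Tauberian theorem of Landau-Delange type then converts meromorphic information about $Z$ into the counting asymptotic, so the goal reduces to locating the rightmost pole of $Z(s)$, computing its order, and identifying its leading coefficient as $c$.

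To extract this analytic information I would apply harmonic analysis on $G(F)\backslash G(\mathbb{A})$. Since $G$ is semisimple adjoint, $G(F)$ is a lattice and the spectral decomposition of $L^{2}(G(F)\backslash G(\mathbb{A}))$ separates into one-dimensional automorphic characters, cuspidal, and residual/continuous parts. Writing $\delta_{S,D}(x)H(s\lambda,x)^{-1}$ as the kernel of a convolution operator and Poisson-dualizing yields
\[ Z(s) = \sum_{\chi} \hat{H}(s\lambda,\chi) + \text{(non-abelian spectrum)}. \]
The dominant contributions come from characters $\chi$ trivial on $\mathrm{Ker}_\lambda$; by the finiteness statement after the formula for $c$, exactly $|\chi_{S,D,\lambda}(G)|$ such characters survive at the rightmost pole.

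The local computation of $\hat{H}_v(s\lambda,\chi_v)$ at places $v \notin S$ reduces to an Igusa-type $\mathfrak{p}$-adic integral over the smooth integral model of $X$, producing a factor of Artin $L$-type whose pole contributions are controlled by $\mathrm{rk}\,\mathrm{Pic}(X\setminus D)$. At $v \in S$, the integral is localized to a tubular neighborhood of the integral locus by $\delta_{S,D,v}$; resolving along the boundary components of $D$ shows the order of pole at $s=a(\lambda)$ matches the dimension $d_v(\lambda)$ of the truncated analytic Clemens complex, since only those components with $\kappa_\alpha < a(\lambda)\lambda_\alpha$ produce $v$-adic poles on the critical line. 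Assembling local factors and summing over the $|\chi_{S,D,\lambda}(G)|$ contributing characters yields a pole at $s=a(\lambda)$ of total order
\[ b(\lambda) = \mathrm{rk}\,\mathrm{Pic}(X\setminus D) + \sum_{v \in S} d_v(\lambda), \]
with residue precisely the adelic integral defining $c$.

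The principal obstacle is adapting this spectral decomposition to the non-split setting. In the split case treated in \cite{TTB}, the parametrization of the relevant automorphic spectrum is controlled by characters of the split torus $T$, but here $T$ splits only after base change to $E$, so the analysis must incorporate representations induced from parabolics defined over proper subfields of $E$. I would handle this by pulling back to $G_E$, parametrizing the contributing characters by Galois-stable elements of the cocharacter lattice of $T_E$, and descending via $\mathrm{Gal}(E/F)$-invariance. The non-abelian spectral contributions must then be bounded uniformly in the spectral parameter using matrix-coefficient decay and repeated integration by parts in the Langlands parameters, in order to confirm they remain holomorphic strictly to the left of $a(\lambda)$. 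Once this uniform bound is established, the Tauberian step produces the stated asymptotic $cB^{a(\lambda)}\log(B)^{b(\lambda)-1}(1+o(1))$.
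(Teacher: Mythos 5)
Your proposal follows essentially the same route as the paper: the height zeta function twisted by $\delta_{S,D}$, the spectral decomposition of $L^2(G(F)\backslash G(\mathbb{A}))$ into automorphic characters plus an Eisenstein/cuspidal part bounded via uniform matrix-coefficient decay (Oh's estimates), Hecke $L$-factors at places outside $S$, the Chambert-Loir--Tschinkel analysis of the local integrals at $v\in S$ via the truncated analytic Clemens complex, and a Tauberian theorem at the end, with the non-split structure handled through restricted roots and Galois orbits exactly in the spirit of the paper's use of the $*$-action and the fields $E_\alpha$. One small correction: you have the truncation criterion reversed --- the vertices with $\kappa_\alpha < a(\lambda)\lambda_\alpha$ are the ones \emph{removed} from $\mathcal{C}^{\mathrm{an}}_{F_v}(D)$ because their local poles lie strictly to the left of $a(\lambda)$; the faces contributing to the pole at $s=a(\lambda)$ are those all of whose vertices satisfy $\kappa_\alpha = a(\lambda)\lambda_\alpha$.
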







\subsection{Outline of paper and method of proof.}

The proof relies on a strategy developed in the earlier papers on rational points and integral points for split groups. We introduce the height zeta function

\[Z(s)=\sum_{x \in G(F) \cap \mathcal{U}(\mathcal{O}_{F,S})}H(x)^{-s}\]

for a complex parameter $s$. By a Tauberian theorem, it is sufficient to establish certain analytic properties of the function $Z(s)$. In particular, we prove its convergence for Re($s)>a(\lambda)$ and then establish its meromorphic continuation in some half-plane Re($s)>a(\lambda)-\delta$. This is achieved by rewriting the expression in terms of the spectral decomposition of $L^2(G(F) \backslash G(\mathbb{A}))$. One of the terms in the spectral expansion is a sum, over a finite set of automorphic characters, of products of local $v$-adic integrals and adelic integrals. After establishing the analytic properties of the adelic integrals and $v$-adic integrals for almost all places, the author learned that integrals of more general type had been analyzed by Chambert-Loir and Tschinkel in \cite{CLT}. In fact, these general results have been applied to establish asymptotic formulas for integral points on vector groups and toric varieties. In contrast to those cases, for compactifications of semisimple groups only finitely many automorphic characters appear in the sum. Nevertheless, the proof given in this paper, being an adaptation of the original proof in \cite{JAMS}, may be of interest.

For the semisimple case, unlike the case of commutative groups, automorphic forms like Eisenstein series have to be considered. As in the case of rational points, uniform estimates need to be established. In \cite{JAMS}, a technical assumption on local height functions was made to simplify the computations, but a paper of Loughran, Tanimoto, and Takloo-Bighash (\cite{loughran}) showed how to remove this restriction. We use their results in this paper.

The rest of the paper is organized as follows. In section 2, we review some background and set up notation. In section 3 we write out a spectral expansion for the height zeta function and establish the required analytic properties.


\section{Background and Notation.}

\subsection{Haar measures and absolute values.}  In this paragraph, let $F$ be a local field of characteristic zero, i.e., either $\mathbb{R}, \mathbb{C}$, or a finite extension of the field $\mathbb{Q}_p$ of $p$-adic numbers. Fix a Haar measure $\mu$ on $F$. Its modulus is an absolute value $|.|$ on $F$, defined by $\mu(a\Omega)=|a|\mu(\Omega)$ for any $a \in F$ and any measurable subset $\Omega \subset F$. 

\subsection{Local and global fields.}  Let $F$ be a number field. Let Val($F)$ be the set of normalized absolute values of $F$. For $v \in $Val($F)$, we let $F_v$ be the $v$-adic completion of $F$. Its absolute value is denoted by $|.|_v$. Let $v$ be a finite place of $F$. We denote by $\mathcal{O}_v$ the ring of $v$-adic integers, by $\mathfrak{m}_v$ its unique maximal ideal. The residue field $\mathcal{O}_v/\mathfrak{m}_v$ is denoted by $k_v$, and we write $q_v$ for its cardinality. We fix a uniformizing element $\varpi$; one has $|\varpi|_v=q_v^{-1}$. We fix an algebraic closure $\overline{F}$ of $F$. and denote the absolute Galois group of $F$ by $\Gamma_F$. We denote by $\mathbb{A}$ the ring of adeles $\mathbb{A}_F$ of the field $F$. It is a locally compact topological ring and carries a Haar measure $\mu$. The quotient space $\mathbb{A}_F/F$ is compact; we normalize $\mu$ so that $\mu(\mathbb{A}_F/F)=1$.

\subsection{Algebraic groups.}

Let $G$ be a semisimple adjoint $F$-group. Let $T$ be a maximal torus in $G$. Let $E$ be a finite Galois extension over $F$ such that $T$ splits over $E$, and let $\Gamma$ denote Gal$(E/F)$. For each place $v$ of $F$, let $S_v$ be the maximal split subtorus of $T_{F_v}$ in $G_{F_v}$. Let $\mathfrak{X}^*(S_v)$ denote the set of characters of $S_v$ defined over $F_v$, and let $\Phi(G_{F_v},S_v)$ denote the set of roots of $S_v$ in $G_{F_v}$. A choice of minimal parabolic in $G_{F_v}$ determines a set $\mathfrak{X}^+(S_v)$ of positive characters, a set $\Phi(G_{F_v},S_v)$ of positive roots in $\mathfrak{X}^*(S_v)$, and a set $\Delta(G_{F_v},S_v)$ of simple roots. Choose a Borel subgroup in $G_E$ and denote by $\Delta(G_E,T_E)$ the corresponding set of simple roots. We define constants $\kappa_\alpha$ for $\alpha \in \Delta(G_E,T_E)$ by 

\[ \sum_{\alpha>0,\alpha \in \Phi(G_E,T_E)}\alpha=\sum_{\alpha \in \Delta(G_E,T_E)}\kappa_\alpha \alpha.\]

For a simple root $\theta \in \Delta(G_{F_v},S_v),$ define a cocharacter $\overset{\vee}\theta$ by \[\langle{\alpha,\overset{\vee}\theta \rangle}=-\delta_{\alpha \theta},\] 

where $\delta_{\alpha \theta}$ is 1 if and only if $\alpha=\theta$. 







\subsubsection{Archimedean places.} Let $H$ be a Lie group and $\mathfrak{h}$ its Lie algebra. Let $U(\mathfrak{h})$ denote the universal enveloping algebra. We regard it as a ring of differential operators on $C^\infty(H)$. Let $G_\infty=\prod_{v \ \text{arch}} G(F_v)$ and $K_\infty = \prod_{v \ \text{arch}} K_v$. Let $C_G$ and $C_K$ be the Casimir operators of the Lie algebras of $G_\infty$ and $K_\infty$, respectively. Let $\Delta$ denote the operator $C_G-2C_K$. It is the admissible Laplacian with respect to the Killing form, in the sense of Borel-Garland (\cite{borelgarland}). The operator $\Delta$ is elliptic, and so all of its eigenvalues are non-negative real numbers.










\subsubsection{Cartan decomposition}

If $v$ is archimedean we set $F_v^0=\{x \in \mathbb{R}:x \geq 0\}$ and $\hat{F}_v=\{x \in \mathbb{R}:x \geq 1\}$. If $v$ is non-archimedean, we fix a uniformizer $\varpi_v$ of $F_v$, and set

$$\hat{F}_v=\{\varpi^{-n}:n \in \mathbb{N}\}, \ \ \  F_v^0=\{\varpi^n:n \in \mathbb{Z}\},$$

$$S_v(F_v)^+=\{a \in S_v(F_v):\alpha(a) \in \hat{F}_v \ \mathrm{for} \ \mathrm{each} \ \alpha \in \Phi^+(S_v)\}$$ 

\begin{theorem} (Bruhat-Tits) For each place $v$, there is a maximal compact subgroup $K_v$ of $G(F_v)$ and a finite set $\Omega_v \subset G(F_v)$ such that

$$G(F_v)=K_v S_v(F_v)^+ \Omega_v K_v.$$

That is, for each $g \in G(F_v)$, there exist unique elements $a \in S_v(F_v)^+$ and $d \in \Omega_v$ such that $g \in K_v adK_v$.
\end{theorem}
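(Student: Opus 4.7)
The plan is to treat the archimedean and non-archimedean places separately; in each case the strategy is to reduce to a known ``polar'' decomposition for a reductive group over a local field, taking $K_v$ to be a distinguished maximal compact subgroup and absorbing into the finite set $\Omega_v$ the discrepancy between $S_v(F_v)$ and the full set of Weyl translations in the relevant apartment.

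For archimedean $v$, $G(F_v)$ is a real Lie group with finitely many connected components. I would let $K_v$ be a maximal compact subgroup, whose existence and uniqueness up to conjugation are guaranteed by the Cartan--Iwasawa--Malcev theorem. Applying the classical $KAK$ decomposition to the identity component $G(F_v)^\circ$, with $A$ the identity component of $S_v(F_v)$ and positive chamber $S_v(F_v)^+$, gives $G(F_v)^\circ = K_v^\circ \cdot S_v(F_v)^+ \cdot K_v^\circ$; a finite set $\Omega_v$ of representatives for the component group $G(F_v)/G(F_v)^\circ$ then extends this to all of $G(F_v)$. Uniqueness of the $a$-part follows from the classical fact that the positive chamber is a strict fundamental domain for the real relative Weyl group acting on the split cocharacter space.

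For non-archimedean $v$, this is the Bruhat--Tits Cartan decomposition. I would work inside the Bruhat--Tits building $\mathcal{B}(G,F_v)$, fix the apartment $\mathcal{A}(S_v)$ stabilized by $N_G(S_v)(F_v)$, and take $K_v$ to be the stabilizer of a special vertex $o \in \mathcal{A}(S_v)$; this is a special maximal parahoric, hence a maximal compact subgroup. Transitivity of $G(F_v)$ on pointed apartments yields the coarse decomposition $G(F_v) = K_v \cdot N_G(S_v)(F_v) \cdot K_v$. Since the relative Weyl group $W_v = N_G(S_v)/Z_G(S_v)$ acts on $\mathcal{A}(S_v)$ with the dominant chamber as strict fundamental domain, I can absorb the $W_v$-piece of $N_G(S_v)(F_v)$ into one $K_v$-factor and reduce to $G(F_v) = K_v \cdot Z_G(S_v)(F_v)^+ \cdot K_v$, where $Z_G(S_v)(F_v)^+$ denotes the preimage of the dominant cone. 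The finite set $\Omega_v$ is then chosen as a set of coset representatives for $Z_G(S_v)(F_v)^+$ modulo $S_v(F_v)^+ \cdot \bigl(K_v \cap Z_G(S_v)(F_v)\bigr)$. The main obstacle is precisely this last step: proving $\Omega_v$ is finite in the non-split setting. This rests on the structure of the minimal $F_v$-Levi $Z_G(S_v)$, specifically that $Z_G(S_v)/S_v$ is $F_v$-anisotropic, so its $F_v$-points are compact modulo center, forcing $Z_G(S_v)(F_v)$ to differ from $S_v(F_v)$ by a compact subgroup together with a finite set of cosets. Uniqueness of $a$ and $d$ follows from the chamber being a strict fundamental domain in the apartment combined with our choice of $\Omega_v$ as distinct coset representatives.
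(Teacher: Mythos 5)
The paper offers no proof of this statement: it is quoted as a known theorem of Bruhat--Tits (in the formulation used in \cite{JAMS}), so there is nothing internal to compare against. Your sketch is the standard argument and is correct in outline, and it is worth recording since the paper omits it. Two places deserve tightening. In the archimedean case, note that a maximal compact subgroup of a real reductive Lie group meets every connected component, so the component-group representatives can be absorbed into $K_v$ itself; this is exactly why the paper can remark that $\Omega_v$ is trivial at archimedean places, whereas with an external set of representatives you must choose them compatibly for the uniqueness of $d$ to survive. In the non-archimedean case, your key finiteness step is right in substance but imprecise as phrased: since $Z_G(S_v)/S_v$ is $F_v$-anisotropic its group of $F_v$-points is compact outright (not merely compact modulo center), and because $S_v$ is a split torus, Hilbert 90 gives $Z_G(S_v)(F_v)/S_v(F_v)\cong (Z_G(S_v)/S_v)(F_v)$; combining this with the fact that $Z_G(S_v)(F_v)$ modulo its maximal compact subgroup is a free abelian group of rank $\dim S_v$ shows the image of $S_v(F_v)$ there has finite index, which is what actually produces the finite set $\Omega_v$ and, with distinct representatives, the uniqueness of $d$. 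Finally, you tacitly use that the fixed point $o$ is \emph{special} twice: once so that $K_v$ acts transitively on apartments through $o$, and once so that representatives of the relative Weyl group can be found inside $K_v$ when you absorb the Weyl part of $N_G(S_v)(F_v)$; both uses should be made explicit, since they fail for a general parahoric. With these points spelled out, your argument is the standard proof of the quoted theorem.
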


We remark that if $F_v$ is archimedean, or if $G$ is unramified (i.e., quasisplit, and splits over an unramified extension), then $G(F_v)=K_v S_v(F_v)^+ K_v$.

\subsubsection{The $*$-action} In this subsection, let $k$ be an arbitrary field of characteristic 0. Let $G$ be a connected reductive group over $k$. Let $S$ be a maximal split torus in $G$ and $T$ a maximal torus containing $S$. Then $G_{k^s}$ is split, and so $T_{k^s}$ is contained in a Borel subgroup $B$ of $G_{k^s}$. Let $(X,\Phi,\Phi^\vee,\Delta)$ be the based root datum of $(G_{k^s},B,T_{k^s})$. As the action of $\Gamma=$Gal$(k^s/k)$ preserves $T_{k^s}$, $\Gamma$ acts naturally on $X=X^*(T)$ and $X^\vee=X_*(T)$. These actions preserve $\Phi$ and $\Phi^\vee$. We recall the $*$-action. If $\sigma$ is an element of $\Gamma$, then there is a unique element $w_\sigma$ in the Weyl group  $W(G_{k^s},T_{k^s})$ such that $w_\sigma(\sigma(\Delta))=\Delta$. The $*$-action of $\Gamma$ on $\Delta$ is defined by $\sigma * \alpha=w_\sigma(\sigma \alpha)$, for $\alpha \in \Delta$. We refer to $\Gamma$ acting by the $*$-action as $\Gamma^*$. If $G$ is split, then the $*$-action is trivial. If $G$ is quasi-split, then the $*$-action is the restriction to $\Delta$ of the natural action of $\Gamma$ on $X^*(T)$. 

If $k'$ is a field extension of $k$ over which $T$ splits, then we identify $X^*(T_{k^s})$ with $X^*(T_{k'})$, etc. Restriction from $T_{k'}$ to $S_{k'}$ defines a surjective homomorphism  res$:X^*(T_{k'}) \rightarrow X^*(S_{k'})$. Let $\Phi(G_{k'},T_{k'})$ be the associated root system and $\Phi(G_k,S):=\text{res}(\Phi)-\{0\}$ be the restricted root system (in general not reduced). The set of simple roots $\Delta(G_{k'},T_{k'}) \subseteq \Phi(G_{k'},T_{k'})$ determines a set of simple restricted roots

$$\Delta_k=\mathrm{res}(\Delta) \backslash \{0\}.$$

(Here $0$ denotes the trivial character on $S$.) The restriction map $\Delta \rightarrow \Delta_k \cup \{0\}$ is surjective. Let $\Delta_0$ denote the set of elements of $\Delta$ that restrict to the trivial character on $S$. The fibers of the restriction map $\Delta-\Delta_0 \rightarrow \Delta_k$ are precisely the orbits of the $*$-action on $\Delta-\Delta_0$. Furthermore, $G$ is quasi-split over $k$ if and only if $\Delta_0$ is trivial (\cite{milne}, 25.28), in which case the number of $\Gamma^*$-orbits is precisely the cardinality of $\Delta_k$.

We will apply these considerations when $k=F_v$ and $k'=E_w$, where $F$ and $E$ are number fields, and $v$ and $w$ are places of $F$ and $W$, respectively, such that $w|v$.

\subsection{The Wonderful Compactification}

If $G$ is an algebraic group over a field $k$ and $H \subseteq G$ is a subgroup scheme, then an equivariant compactification of the homogeneous space $G/H$ is defined to be a proper $G$-scheme equipped with an open equivariant immersion $G/H \rightarrow X$ with schematically dense image. If char$(k)=0$, then by the existence of equivariant desingularizations, every homogeneous space has a smooth projective equivariant compactification. \par

Suppose that $G$ is a semisimple adjoint group over an algebraically closed field. In this case, De Concini and Procesi (\cite{deconcini}) constructed an equivariant compactification $X$ of $G$, equipped with an action of the group $G \times G$ that extends the action of $G \times G$ on $G$ by left and right multiplication. In other words, $X$ is a bi-equivariant compactification of the homogeneous space $G=(G \times G)/\mathrm{diag}(G)$, where diag$(G)$ denotes the diagonal in $G \times G$. De Concini and Springer (\cite{deconcinispringer}) extended the construction over arbitrary fields. \par

We summarize some of the properties that will be used, working initially over an algebraically closed field. Let $B$ be a Borel subgroup of $G$ and let $T$ be a maximal split torus of $G$ contained in $B$. Let $r$ denote the rank of $G$, and denote the simple roots by $\alpha_1,...,\alpha_r$. The compactification $X$ is smooth, and the boundary $X \backslash G$ is the union of $r$ nonsingular prime divisors with normal crossings. The weight lattice of $G$ is isomorphic to the Picard group Pic$(X)$. The isomorphism defined in \cite{bk}(Prop 6.1.11) identifies the boundary divisors with the simple roots, and so Pic$(X)$ is a free abelian group of rank $r$, generated by the fundamental weights. We shall write $D_{\alpha_1},...,D_{\alpha_r}$ for the corresponding boundary divisors. \par

Now we consider the case over number fields. Let $G$ be semisimple adjoint over a number field $F$, and let $X$ be the corresponding wonderful compactification. The Galois group $\Gamma=\mathrm{Gal}(\overline{F}/F)$ acts on Pic$(X_{\overline{F}})$. The bijection between the set of simple roots and the boundary divisors if $\Gamma$-equivariant, and so Pic$(X)$ is freely generated by the line bundles corresponding to the orbits of the simple roots under the $*$-action. The $F$-irreducible boundary components of $X$ are the divisors of the form 

$$D_J=\sum_{\alpha \in J}D_\alpha$$

for $\Gamma^*$-stable subsets $J \subset \Delta(G_{\overline{F}},T_{\overline{F}})$.

We denote the set of boundary divisors of $X$ by $\mathcal{A}$. The closed cone Eff$(X) \subset$ Pic$(X)_{\mathbb{R}}$ of effective divisors on $X$ is generated by the boundary components of $X$, i.e.,

\begin{center}
Eff$(X)=\bigoplus_{\alpha \in \mathcal{A}}\mathbb{R}_{\geq 0}D_\alpha$.
\end{center}

A dominant weight $\lambda$ is called regular if $\lambda=\sum_{\alpha \in \Delta}m_\alpha \omega_\alpha$ with all $m_\alpha>0$ where $\{\omega_\alpha:\alpha \in \Delta\}$ is the set of fundamental weights. The globally generated line bundles correspond to the dominant weights, and the ample line bundles correspond to the regular dominant weights. 
An anticanonical divisor for $X$ is given by

$$-K_X = \sum_\alpha (\kappa_\alpha+1)D_\alpha.$$


\subsection{Height functions}


Endow the line bundles $\mathcal{O}_X(D_\alpha)$, for $\alpha \in \mathcal{A}$, with smooth adelic metrics $||.||_\alpha$. Let $f_\alpha$ be the canonical section of the line bundle $\mathcal{O}_X(D_\alpha)$ whose divisor is equal to $D_\alpha$. 

For each place $v$ of $F$, define a map $H_v$ as follows: for $\textbf{s} \in \mathbb{C}^\mathcal{A} \cong \mathrm{Pic}(X)_\mathbb{C}$ and $g \in G(F_v)$, we set

$$H_v(\textbf{s},g_v)=\prod_{\alpha \in \mathcal{A}}||f_\alpha(g_v)||_v^{-s_\alpha}.$$

Now define the ``height pairing" $H:\mathrm{Pic}(X)_\mathbb{C} \times G(\mathbb{A}) \rightarrow \mathbb{C}$ by

$$H(\textbf{s},(g_v)_v)=\prod_v H_v(\textbf{s},g_v).$$

When $\textbf{s}$ correspond to a very ample class $\lambda$ in Pic$(X)$, the restriction of $H(\textbf{s},.)$ to $G(F)$ is the standard (exponential) height relative to the projective embedding of $X$ defined by $\lambda$. For each real number $B$, the set of $x$ in $G(F)$ such that $H(\textbf{s},x) \leq B$ is finite.

\subsubsection{Reducing to the simple case.} The adjoint group $G$ decomposes into simple factors, $G=G_1 \times ... \times G_m$ (\cite{milne}). Each $G_i$ is also adjoint, and the wonderful compactification of $G$ is $X=X_1 \times ... \times X_m$, where $X_i$ denotes the wonderful compactification of $G_i$. By the Bruhat decomposition, $G_i$ is geometrically rational (see, for instance, \cite{linearization}, Prop. 5.1.3), and hence so is $X_i$. Therefore (\cite{linearization}, Prop. 5.1.2) Pic$((X_1)_{\overline{F}} \times...\times (X_m)_{\overline{F}})=\bigoplus_{i=1}^m \mathrm{Pic}((X_i)_{\overline{F}})$. The height functions on the product are expressed as products of height functions (\cite{hindry}, exercise F. 15). Therefore, we will assume that $G$ is simple.





\subsection{Eisenstein series}


The height zeta function $Z(s)$ will be expressed in terms of Eisenstein series. We review the general definition of Eisenstein series, closely following \cite{arthur}.

\vspace{3mm}

1. Let $G$ be a reductive group over $F$. If $v$ is finite, define $K_v$ to be $G(\mathcal{O}_v)$ if this latter group is a special maximal compact subgroup of $G(F_v)$. This takes care of almost all $v$. For the remaining finite $v$, we let $K_v$ be any fixed special maximal compact subgroup of $G(F_v)$. We also fix a minimal parabolic subgroup $P_0$, defined over $F$, and a Levi component $M_0$ of $P_0$. Let $A_0$ be the maximal split torus in the center of $M_0$. For each archimedean place $v$ of $F$, we choose a maximal compact subgroup $K_v \subset G(F_v)$ such that

$$G(F_v)=K_v A_0(F_v)K_v.$$

We set $K=\prod K_v$. It is a maximal compact subgroup of $G(\mathbb{A})$. We also set $K_0=\prod_{v<\infty} K_v$ and $K_\infty=\prod_{v|\infty}K_v$.

\vspace{3mm}
2. Fix a parabolic subgroup $P$ defined over $F$ that contains $P_0$. Let $N=N_P$ be the unipotent radical of $P$. Let $M_P$ be the unique Levi component of $P$ that contains $M_0$. Then the split component, $A_P$, of the center of $M_P$ is contained in $A_0$. Let $X^*(M_P)_F$ be the group of characters of $M_P$ defined over $F$, and define

\begin{center}
$\mathfrak{a}_{M_P}=$Hom$(X^*(M_P)_F,\mathbb{R})$.
\end{center}

We have a map

$$H_{M_P}:M_P(\mathbb{A}) \rightarrow \mathfrak{a}_{M_P}$$ 

defined by

$$\mathrm{exp}\langle{H_M(m),\chi\rangle}=\prod_v |\chi(m_v)|_v$$

for $\lambda \in X^*(M_P)_F$.

Let $M_P(\mathbb{A})^1$ be the kernel of the homomorphism $H_{M_P}$. Using the Iwasawa decomposition $G(\mathbb{A})=N_P(\mathbb{A}) M_P(\mathbb{A}) K$, we define a morphism

$$H_P:G(\mathbb{A}) \rightarrow \mathfrak{a}_{M_P}$$

by

$$nmk \mapsto H_{M_P}(m)$$

for $(n,m,k) \in N(\mathbb{A}) \times M_P(\mathbb{A}) \times K$.

\vspace{3mm}
3. Let $W$ be the restricted Weyl group of $(G,A_0)$. Then $W$ acts on the dual space of $\mathfrak{a}_0$. For a pair of standard parabolic subgroups $P, P_1$, let $W(\mathfrak{a}_P,\mathfrak{a}_{P_1})$ be the set of distinct linear isomorphisms from $\mathfrak{a}_P$ onto $\mathfrak{a}_{P_1}$ obtained by restricting elements in $W$ to $\mathfrak{a}_P$. Two parabolic subgroups $P$ and $P_1$ are said to be associated if $W(\mathfrak{a}_P,\mathfrak{a}_{P_1})$ is not empty. This defines an equivalence relation on the set of parabolic subgroups in $G$. If $P$ and $P_1$ are associate, then $M=M_1$. Moreover, $w$ preserves $M=M_1$, where $w \in W(\mathfrak{a}_P,\mathfrak{a}_{P_1})$. In view of this fact, it is therefore natural to expect a relationship between representations of $G$ induced from $P$ and those induced from $P'.$ 

Suppose that $P$ is a parabolic subgroup. There is a finite number of disjoint open subsets of $\mathfrak{a}_P$, called the chambers of $\mathfrak{a}_P$. We shall write $n(A_P)$ for the number of chambers.














\vspace{3mm}
4. Let $M$ be the Levi factor of some standard parabolic $P$ of $G$. Let $$L^2_{\mathrm{cusp}}(M(F) \backslash M(\mathbb{A}_F)^1)$$ be the space of functions $\phi$ in $L^2(M(F) \backslash M(\mathbb{A}_F)^1)$ such that for every parabolic $Q \subseteq P$ we have

$$\int_{N_Q(F) \cap M(F) \backslash N_Q(\mathbb{A}) \cap M(\mathbb{A})}\phi(nm) \ dn=0$$

for almost all $m$. There is a $G(\mathbb{A})$-invariant orthogonal decomposition

$$L^2_{\mathrm{cusp}}(M(F) \backslash M(\mathbb{A})^1)=\bigoplus_\sigma L^2_{\mathrm{cusp},\sigma}(M(F) \backslash M(\mathbb{A})^1),$$

where $\sigma$ ranges over irreducible unitary representations of $M(\mathbb{A})^1$, and $$L^2_{\mathrm{cusp},\sigma}(M(F) \backslash M(\mathbb{A}))$$ is $M(\mathbb{A})$-isomorphic to a finite number of copies of $\sigma$. An irreducible unitary representation of $M(\mathbb{A})^1$ is said to be cuspidal if $L^2_{\mathrm{cusp},\sigma}(M(F) \backslash M(\mathbb{A}))\neq 0$.

\vspace{3mm}
5. We define an equivalence relation on the set of pairs $(M,\rho)$ with $M$ a Levi factor of some standard parabolic subgroup of $G$ and $\rho$ is an irreducible unitary representation of $M(\mathbb{A})^1$ occurring in $L^2_{\mathrm{cusp}}(M(F) \backslash M(\mathbb{A})^1)$. A \textit{cuspidal automorphic datum} is defined to be an equivalence class of pairs $(P,\sigma)$, where $P \subset G$ is a standard parabolic subgroup of $G$, and $\sigma$ is an irreducible representation of $M_P(\mathbb{A})^1$ such that the space 

$$L^2_{\mathrm{cusp},\sigma}(M_P(F) \backslash M_P(\mathbb{A})^1)$$ is nonzero. 

The restricted Weyl group $W$ of $(G,A_0)$ acts naturally on $\mathfrak{a}_{P_0}$ and $\mathfrak{a}_{P_0}^*$. For any $s \in W$, fix a representative $w_s$ in the intersection of $G(F)$ with the normalizer of $A_0$. The equivalence relation is defined as follows: $(P',\sigma')$ is equivalent to $(P,\sigma)$ if there is an element $s \in W(\mathfrak{a}_P, \mathfrak{a}_{P'})$ such that the representation

$$s^{-1}\sigma':m \mapsto \sigma'(w_s m w_s^{-1}),  \ \ \ \ \ \ \ \ \ \ m \in M_P(\mathbb{A})^1,$$

of $M_P(\mathbb{A})^1$ is unitarily equivalent to $\sigma$. We write $\mathfrak{X}$ of the set of cuspidal automorphic data $\chi=\{(P,\sigma)\}$.

For any $\chi \in \mathfrak{X}$ let $\mathcal{P}_\chi$ denote the class of associated parabolic subgroups consisting of those parabolic subgroups $P$ with a Levi subgroup $M$ and a representation $\rho$ such that $(M,\rho) \in \chi$.




\vspace{3mm}
If $M$ is the Levi factor of some parabolic subgroup and $\chi \in \mathfrak{X}$, set

$$L^2_{\text{cusp}}(M(F) \backslash M(\mathbb{A})^1)_\chi=\bigoplus_{(\rho:(M,\rho) \in \chi)}V_\rho.$$

This is a closed subspace of $L^2_{\text{cusp}}(M(F) \backslash M(\mathbb{A})^1)$. It is zero if $P \notin \mathcal{P}_\chi$ for every parabolic subgroup $P$ that has $M$ as a Levi factor. Then we have

$$L^2_{\text{cusp}}(M(F) \backslash M(\mathbb{A})^1)=\bigoplus_{\chi \in \mathfrak{X}}L^2_{\mathrm{cusp}}(M(F) \backslash M(\mathbb{A})^1)_\chi.$$

\vspace{3mm}
6. Any class $\chi=\{(\mathcal{P},\sigma)\}$ in $\mathfrak{X}$ determines an associated class of standard parabolic subgroups. For any $P$, let $\Pi(M_P)$ denote the set of equivalence classes of irreducible unitary representations of $M_P(\mathbb{A})$. If $\zeta \in \mathfrak{a}_\mathbb{C}^*$ and $\pi \in \Pi(M_P)$, let $\pi_\zeta$ be the product of $\pi$ with the quasi-character


$$x \mapsto e^{\zeta(H_P(x))}, \ \ \ x \in G(\mathbb{A}).$$

If $\zeta$ belongs to $i\mathfrak{a}_P^*$, then $\pi_\zeta$ is unitary, and so we obtain a free action of the group $i\mathfrak{a}_P^*$ on $\Pi(M_P)$. Then $\Pi(M_P)$ becomes a differentiable manifold whose connected components are the orbits of $i\mathfrak{a}_P^*$. We can transfer our Haar measure on $i\mathfrak{a}_P^*$ to each of the orbits in $\Pi(M_P)$. This allows one to define a measure $d\pi$ on $\Pi(M_P)$.

\vspace{3mm}
7. Let $\pi \in \Pi(M_P)$. We will define certain representations of $G(\mathbb{A})$ induced from $\pi$. First we describe the space of the representation. Let $H_P^0(\pi)$ be the space of smooth functions
\[\phi:N_P(\mathbb{A})M_P(F) \backslash G(\mathbb{A}) \rightarrow \mathbb{C}\] which satisfy the following conditions:

(i) $\phi$ is right $K$-finite, i.e., the span of the set of functions $\phi_k:x \mapsto \phi(xk), x \in G(\mathbb{A})$, indexed by $k \in K$, is finite dimensional.

(ii) For every $x \in G(\mathbb{A})$, the function $m \mapsto \phi(mx), m \in M_P(\mathbb{A})$
is a matrix coefficient of $\pi$.

(iii) If we define $||\phi||^2$ by the equation $$||\phi||^2 = \int_K \int_{M_P(F) \backslash M_P(\mathbb{A})^1}|\phi(mk)|^2 \ dm \ dk,$$ then $||\phi||^2<\infty$.

Let $H_P(\pi)$ be the Hilbert space completion of $H_P^0(\pi)$. Now we describe the action of $G(\mathbb{A})$ on this space. More precisely, we will define representations $I_P(\pi_\zeta)$ of $G(\mathbb{A})$, one for each $\zeta \in \mathfrak{a}_{P,\mathbb{C}}^*$, as follows. For each $y \in G(\mathbb{A})$, $I_P(\pi_\zeta)(y)$ maps a function $\phi$ of $H_P(\pi)$ to the function given by

$$(I_P(\pi_\zeta)(y)\phi)(x)=\phi(xy)e^{(\zeta+\rho_P)(H_P(xy))}e^{-(\lambda+\rho_P)(H_P(x))}.$$

For $\phi \in H_P(\pi)$ and $\zeta \in \mathfrak{a}_{\mathbb{C}}^*$, define

$$\phi_\zeta(x)=\phi(x)e^{\zeta(H_P(x))}, \ \ x \in G(\mathbb{A}).$$

Recall that the function $e^{\rho_P(H_P(.))}$ is the square root of the modular function of the group $P(\mathbb{A})$. It is included in the definition so that the representation $I_P(\pi_\zeta)$ is unitary whenever the inducing representation is unitary, which is to say, whenever $\zeta$ belongs to the subset $i\mathfrak{a}_P^*$ of $\mathfrak{a}_{P,\mathbb{C}}^*$. The above equation can be rewritten as

$$(I_P(\pi_\zeta)(y))(\phi)(x)=\phi_\zeta(xy)\delta_P(xy)^{1/2}\delta_P(x)^{-1/2}.$$

We have put the twist by $\zeta$ into the operator $I_P(\pi_\zeta)(y)$ rather than the underlying Hilbert space $H_P(\pi)$, so that $H_P(\pi)$ is independent of $\zeta$. We remark that $H_P(\pi)=\{0\}$ unless there is a subrepresentation of the regular representation of $M(\mathbb{A})^1$ on $L^2(M(F) \backslash M(\mathbb{A})^1)$ which is equivalent to the restriction of $\pi$ to $M(\mathbb{A})^1$.

\vspace{3mm}
8. Note that $I_P(\pi_\zeta)$ is a representation of $G(\mathbb{A})$ on a space of functions on $G(\mathbb{A})$, but the functions in the representation space $H_P(\pi)$ are not left invariant under $G(F)$. The functions in the space are left invariant under $P(F)$, however. We will average the functions to make them invariant under $G(F)$. We are now ready to define Eisenstein series. They provide an intertwining map from $I_P(\pi_\zeta)$ to the regular action of $G(\mathbb{A})$ on functions on $G(\mathbb{A})$.

Suppose that $\pi \in \Pi(M)$. For $\phi \in H_P^0(\pi)$ and $\zeta \in (\mathfrak{a}_P^*)_{\mathbb{C}}$, we formally define

$$E(x,\phi, \zeta)=\sum_{\gamma \in P(F) \backslash G(F)} \phi_\zeta(\gamma x) \delta_P(\gamma x)^{1/2}.$$




This expression is defined by a sum over a noncompact space. In general, such an expression does not converge. For any $P$, we can form the chamber

$$(\mathfrak{a}_P^*)^+=\{\Lambda \in \mathfrak{a}_P^*:\Lambda(\alpha^\vee)>0 \ \text{for} \ \text{all} \ \alpha \in \Delta_P\}$$

in $\mathfrak{a}_P^*.$ Here $\Delta_P$ is the set of roots of $(P,A_P)$, where $A_P$ is the split component of the center of $M_P$. As before, suppose that $\pi \in \Pi(M), \phi \in H_P^0(\pi)$, and $\zeta \in \mathfrak{a}^*_{P,\mathbb{C}}$. It is a theorem of Langlands that if $\zeta$ lies in the open subset of $\mathfrak{a}_{P,\mathbb{C}}^*$ with Re$(\zeta) \in \rho_P+(\mathfrak{a}_P^*)^+$, then the sum that defines $E(x,\phi,\zeta)$ converges absolutely to an analytic function of $\zeta$.

The set of points $\zeta$ for which $I_P(\zeta)$ is unitary, i.e. such that $\zeta$ belongs to the real subspace $i\mathfrak{a}_P^*$ of $\mathfrak{a}_{P,\mathbb{C}}^*$, is never inside the domain of absolute convergence of the Eisenstein series. It is a theorem of Langlands that the series $E(x,\phi,\zeta)$ have analytic continuations to this space. More precisely, if $\phi \in H_P^0(\pi)$, then $E(x,\phi,\zeta)$ can be analytically continued to a meromorphic function of $\zeta \in \mathfrak{a}_{P,\mathbb{C}}^*$. If $\zeta \in i \mathfrak{a}_P^*$, then $E(x,\phi,\zeta)$ is analytic. We denote the value of this analytically continued function at $\zeta=0$ by $E(x,\phi)$.

\vspace{3mm}
9. We need more notation to write down a spectral decomposition of the height zeta function. Suppose that $P$ is fixed and that $\chi \in \mathfrak{X}$. Suppose first of all that there is a group $P_1$ in $\mathcal{P}$ which is contained in $P$. Let $\psi$ be a smooth function on $N_{P_1}(\mathbb{A})M_{P_1}(F)\backslash G(\mathbb{A})$ such that

$$\Psi_a(m,k):=\psi(amk), k \in K, m \in M_{P_1}(F)\backslash M_{P_1}(\mathbb{A})^1, a \in A_{P_1}(F)\backslash A_{P_1}(\mathbb{A})$$

vanishes for $a$ outside a compact subset of $A_{P_1}(F)\backslash A_{P_1}(\mathbb{A})$, transforms under $K_\infty$ according to an irreducible representation $W$, and, as a function of $m$, belongs to $L^2_{\text{cusp}}(M_{P_1}(F)\backslash M_{P_1}(\mathbb{A})^1)$. The function

$$\hat{\psi}^M(m):=\sum_{\delta \in P_1(F)\cap M_P(F) \backslash M_P(F)}\psi(\delta m), \ \ \ m \in M_P(F)\backslash M_P(\mathbb{A})^1,$$

is square integrable on $M_P(F)\backslash M_P(\mathbb{A})^1$. We define 

$$L^2(M_P(F)\backslash M_P(\mathbb{A})^1)_\chi$$

to be the closed span of all functions of the form $\hat{\psi}^M$, where $P_1$ runs through those groups in $\mathcal{P}$ which are contained in $P$, and $W$ is allowed to vary over all irreducible representations of $K_\infty$. If there does not exist a group $P_1 \in \mathcal{P}$ which is contained in $P$, define $L^2(M_P(F)\backslash M_P(\mathbb{A})^1)_\chi$ to be $\{0\}$. Then we have an orthogonal direct sum decomposition

$$L^2(M_P(F)\backslash M_P(\mathbb{A})^1)=\bigoplus_{\chi \in \mathfrak{X}} L^2(M_P(F)\backslash M_P(\mathbb{A})^1)_\chi.$$

Given $\chi \in \mathfrak{X}$, let $H_P(\pi)_\chi$ be the closed subspace of $H_P(\pi)$ consisting of those $\phi$ such that for all $x$ the function

$$m \mapsto \phi(mx), m \in M_P(F) \backslash M_P(\mathbb{A})^1$$

belongs to $L^2(M_P(F) \backslash M_P(\mathbb{A})^1)_\chi$. Then

$$H_P(\pi)=\bigoplus_{\chi \in \mathfrak{X}} H_P(\pi)_\chi.$$

Suppose that $W$ is an equivalence class of irreducible representations of $K_\infty$. Let $H_P(\pi)_{\chi,K_0}$ be the subspace of functions in $H_P(\pi)_\chi$ which are invariant under $K_0 \cap K$, and let $H_P(\pi)_{\chi,K_0,W}$ be the space of functions in $H_P(\pi)_{\chi,K_0}$ which transform under $K_\infty$ according to $W$. Each of the spaces $H_P(\pi)_{\chi,K_0,W}$ is finite-dimensional. We shall need orthonormal bases of the spaces $H_P(\pi)_\chi$. We fix such a basis, $B_P(\pi)_\chi$, for each $\pi$ and $\chi$, in such a way that for every $\zeta \in i\mathfrak{a}^*$,

$$B_P(\pi_\zeta)_\chi=\{\phi_\zeta:\phi \in B_P(\pi)_\chi \},$$

and that every $\phi \in B_P(\pi)_\chi$ belongs to one of the spaces $H_P(\pi)_{\chi,K_0,W}$.

\subsection{Clemens complexes}

For each place $v$ of $F$, we denote the $v$-adic analytic Clemens complex of $D$ by $\mathcal{C}^{an}_{F_v}(D)$. We refer to section 3.1.6 of \cite{integralvector} for the relevant definitions. Its faces correspond to irreducible components of intersections of irreducible components of $D_{F_v}$, which contain $F_v$-rational points.

\subsection{Notation}

\subsubsection{Maximal compact subgroups.} For each nonarchimedean place $v$ of $F$, there exists a compact open subgroup $K_v \subset G(F_v)$ such that for all $L$, $H_{L,v}$ is bi-$K_v$-invariant. Moreover, one may take $K_v=G(\mathcal{O}_v)$ for all but finitely many $v$ (\cite{JAMS}, Prop. 6.3.). Let $K=\prod_v K_v$, and let $K_0 = \prod_{v <\infty}K_v$.

\subsubsection{Measures.} For each $v \in \text{Val}(F)$, let $dg_v$ denote the Haar measure on $G(K_v)$ such that $K_v$ has volume 1 whenever $v < \infty$. Then the collection $\{dg_v:v \in \text{Val}(F)\}$ defines a Haar measure, say $\mu$, on $G(\mathbb{A})$. Since $G$ is semisimple, then $G(F)$ is a lattice in $G(\mathbb{A})$. Thus, by replacing $dg_v, v \in F_\infty$, with a suitable multiple of it, we may assume that $\mu(G(F) \backslash G(\mathbb{A}))=1$. 

\subsubsection{Modular quasicharacter.} For a parabolic subgroup $P$ of a connected reductive group over a nonarchimedean local field $F$, the modular quasi-character by $\delta_P:P(F) \rightarrow \mathbb{R}^\times$ is defined by $$\delta_P(p)=|\mathrm{det} \mathrm{Ad}_{\mathfrak{p}}(p):\mathfrak{p}(F) \rightarrow \mathfrak{p}(F))|.$$

\subsubsection{Domains of convergence.} Let $\mathcal{T}_G$ be the subset of $\mathbb{C}^r$ consisting of vectors $(s_1,...,s_r)$ such that $s_i=s_j$ whenever $\alpha_i$ and $\alpha_j$ are in the same Galois orbit. For $\epsilon \in \mathbb{R}$, let $\mathcal{T}_\epsilon$ denote the set of $\textbf{s}=(s_\alpha)_\alpha \in \mathcal{T}_G$ such that Re$(s_\alpha)>\kappa_\alpha+1+\epsilon$, for all $\alpha$. For each subset $R$ of $\mathbb{C}$, we set $\mathcal{T}(R)$ to be the collection of $\textbf{s}=(s_\alpha)_\alpha$ with $s_\alpha \in R$ for all $\alpha$. We set
$\mathcal{T}_\epsilon^D=\{\textbf{s}:\text{Re}(s_\alpha)>\kappa_\alpha+1+\epsilon,\ \text{for all} \ \alpha \notin \mathcal{A}_D\}.$

\subsubsection{Automorphic characters.} Let $\mathcal{X}$ be the set of unitary continuous $G(F)$-invariant homomorphisms $G(\mathbb{A}) \rightarrow \mathbb{C}^\times$ which are invariant under $K$ on both sides. This set is finite (see \cite{GMO}, Lemma 4.7 (2).).

\subsubsection{Integral points.} Let $S$ be a finite set of places of $F$ containing the archimedean places. Fix a model $\mathcal{U}$ over the ring $\mathfrak{o}_{F,S}$ of $S$-integers in $F$. The $S$-integral points of $U$ are the elements of $\mathcal{U}(\mathfrak{o}_{F,S})$, in other words, those rational points of $U(F)$ which extend to a section of the structure morphism from $\mathcal{U}$ to Spec$(\mathfrak{o}_{F,S})$. For any finite place $v$ of $F$ such that $v \notin S$, let $\mathfrak{u}_v=\mathcal{U}(\mathfrak{o}_{F,v})$, and let $\delta_v$ be the characteristic function of the subset $\mathfrak{u}_v \subset X(F_v)$. A point $x \in X(F)$ is an $S$-integral point of $U$ if and only if $x \in \mathfrak{u}_v$ for every place $v$ of $F$ such that $v \notin S$. This condition is equivalent to the condition that $\prod_{v \notin S}\delta_v(x)=1$. For $v \in S$ we set $\delta_v \equiv 1$. For $g \in G(\mathbb{A})$, we define $$\delta_{D,S}(g)=\prod_{v \notin S}\delta_v(g_v).$$

By the definition of an adelic metric, there exists a finite set of places $S_F$ and a flat projective model $\mathcal{X}$ over $\mathfrak{o}_{F,S_F}$ such that:

\begin{itemize}
    \item $\mathcal{X}$ is a smooth equivariant compactification of the $\mathfrak{o}_{F,S_F}$-group scheme $G$;
    
    \item for each $\alpha \in \mathcal{A}$, the closure $\mathcal{D}_\alpha$ of $D_\alpha$ is a divisor on $\mathcal{X}$;
    
    \item the boundary $\mathcal{X} \backslash G$ is the union of these divisors $\mathcal{D}_\alpha$;
    
    \item for each $\alpha$, the section $f_\alpha$ extends to a global section of the line bundle $\mathcal{O}(\mathcal{D}_\alpha)$ on $\mathcal{X}$ whose divisor is precisely $\mathcal{D}_\alpha$.
\end{itemize}

For places $v \notin S_D \cup S_F$, $\mathfrak{u}_v=\mathcal{U}(\mathfrak{o}_v)$.

\section{Heights on the Wonderful Compactification}

\subsection{Spectral expansion.}

To study the distribution properties of $(D,S)$-integral points we will establish analytic properties of the height zeta function

$$Z_{S,D}:\mathcal{T}_G \times G(\mathbb{A}) \mapsto \mathbb{C},$$

which is defined by

$$Z_{D,S}(\textbf{s},g)=\sum_{\gamma \in G(F)}\delta_{D,S}(g)H(\textbf{s},\gamma g)^{-1}.$$

\begin{proposition}
Given $g \in G(\mathbb{A})$, the series defining $Z_{D,S}(.,g)$ converges absolutely to a holomorphic function for $\textbf{s} \in \mathcal{T}_{\gg 0}$. For all $\textbf{s}$ in the region of convergence,

$$Z_{S,D}(\textbf{s},.) \in C^\infty(G(F) \backslash G(\mathbb{A})),$$

and for all integers $n \geq 1$ and all $\partial \in U(\mathfrak{g}), \partial^n Z_{S,D}(\textbf{s}.) \in L^2$. Moreover, in this domain, we have an equality

\begin{equation}Z_{S,D}(\textbf{s},g)=\sum_{\chi \in \mathfrak{X}} \sum_P \dfrac{1}{n(A_P)}\int_{\Pi(M_P)}\int_{G(\mathbb{A})}\delta_{S,D}(g')H(\textbf{s},g')^{-1}(\sum_{\phi \in B_P(\pi)_\chi}E(g,\phi)\overline{E(g',\phi)} ) dg' d\pi.
\end{equation}
\end{proposition}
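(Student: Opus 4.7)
The plan is to split the proposition into two main tasks: (a) prove pointwise absolute convergence of the defining series together with smoothness and $L^2$-bounds for its derivatives, and (b) deduce the spectral expansion via Langlands' decomposition followed by an unfolding step.

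For (a), in the deep region $\mathcal{T}_{\gg 0}$, I would majorize the $G(F)$-sum by an adelic integral. Choose a relatively compact open neighborhood $\Omega$ of the identity in $G(\mathbb{A})$ on which $H(\textbf{s},\cdot)^{-1}$ varies by a bounded multiplicative factor (possible since the adelic metric is smooth) and on which the translates $\gamma\Omega$, $\gamma \in G(F)$, have bounded overlap (since $G(F)$ is discrete). This gives
$$\sum_{\gamma \in G(F)} \delta_{D,S}(\gamma g)\, H(\textbf{s},\gamma g)^{-1} \ll \int_{G(\mathbb{A})} H(\textbf{s},g')^{-1}\,dg',$$
and the right side factors as an Euler product. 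For almost every place the local factor can be evaluated using the Cartan decomposition (Theorem 1), reducing it to a sum over $S_v(F_v)^+$ of products of local norms of the positive roots, which converges for $\mathrm{Re}(s_\alpha) > \kappa_\alpha+1$; the finitely many remaining places contribute a bounded factor, and inserting $\delta_{D,S}$ only restricts the domain of summation. Right-differentiating by any $\partial \in U(\mathfrak{g})$ transforms $H(\textbf{s},\cdot)^{-1}$ into an expression of the same shape times a bounded multiplier, so the same majorization applied term-by-term yields smoothness, $G(F)$-invariance of the limit, and the $L^2$-estimate for $\partial^n Z_{S,D}$ after integrating the pointwise bound against Haar measure on the compact quotient $G(F)\backslash G(\mathbb{A})^1$.

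For (b), since $Z_{S,D}(\textbf{s},\cdot)$ is smooth, of moderate growth, and $K$-finite (by bi-$K$-invariance of $H$), Langlands' spectral decomposition (as reviewed in the form we use in \cite{arthur}) yields
$$Z_{S,D}(\textbf{s},g) = \sum_{\chi \in \mathfrak{X}} \sum_P \frac{1}{n(A_P)} \int_{\Pi(M_P)} \sum_{\phi \in B_P(\pi)_\chi} \bigl\langle Z_{S,D}(\textbf{s},\cdot),\, E(\cdot,\phi)\bigr\rangle\, E(g,\phi)\, d\pi.$$
The inner product $\langle Z_{S,D}(\textbf{s},\cdot), E(\cdot,\phi)\rangle$ is an integral over $G(F)\backslash G(\mathbb{A})$ of the defining sum against $\overline{E(\cdot,\phi)}$; unfolding via the left $G(F)$-invariance of $E$ collapses this to $\int_{G(\mathbb{A})} \delta_{S,D}(g')\, H(\textbf{s},g')^{-1}\, \overline{E(g',\phi)}\, dg'$, yielding equation (1).

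The main obstacle is justifying the unfolding: Eisenstein series have at most polynomial growth along the cusps and are not in $L^1(G(F)\backslash G(\mathbb{A}))$, so exchanging the sum and integral is not automatic. It requires that $\textbf{s}$ be chosen deep enough in $\mathcal{T}_{\gg 0}$ that $H(\textbf{s},\cdot)^{-1} |E(\cdot,\phi)|$ is integrable on all of $G(\mathbb{A})$. I would verify this by combining the Moeglin--Waldspurger polynomial growth estimates for Eisenstein series with the Cartan decomposition bound on the local heights from part (a); Fubini then finishes the argument.
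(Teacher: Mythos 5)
Your strategy is sound, but it is worth knowing that the paper itself does none of this work: its proof is a one-line reduction, observing that $0\le\delta_{D,S}\le 1$ makes $Z_{S,D}$ a term-by-term subsum of the height zeta function for rational points, so that all four assertions (absolute convergence, smoothness, $\partial^n Z\in L^2$, and the pointwise spectral expansion) are inherited verbatim from Proposition 8.2 of \cite{JAMS}. What you propose is essentially a reconstruction of that underlying proof: the majorization of the $\gamma$-sum by the adelic height integral via a small neighborhood with disjoint $G(F)$-translates, the evaluation of the local factors by the Cartan decomposition with convergence for $\mathrm{Re}(s_\alpha)>\kappa_\alpha+1$, and the unfolding of $\langle Z_{S,D},E(\cdot,\phi)\rangle$ against the Eisenstein series, with the Fubini step justified in a deep tube. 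The trade-off is clear: the citation route is shorter and is exactly why the paper states the proposition without new argument (the insertion of $\delta_{D,S}$ only shrinks the series and is locally constant at the finite places, so it disturbs nothing), while your route makes the mechanism visible and would be what one writes if \cite{JAMS} were not available.

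Two points in your step (b) need repair, though neither is fatal. First, you justify pointwise validity of the spectral expansion by ``smooth, of moderate growth, and $K$-finite''; but the archimedean height $H_\infty(\textbf{s},\cdot)$ is in general \emph{not} $K_\infty$-finite (this is precisely why the paper later expands $H_\infty$ into $K_\infty$-types via Harish-Chandra's theorem), and moderate growth alone does not give pointwise convergence of the Langlands decomposition. The correct input is the Sobolev-type statement you already establish in (a): $Z_{S,D}(\textbf{s},\cdot)$ is smooth and $\partial^n Z_{S,D}\in L^2$ for all $\partial\in U(\mathfrak{g})$ and all $n$, which (via the Laplacian $\Delta$ and the standard argument in \cite{JAMS}) yields convergence of the expansion uniformly on compacta; you should route the argument through that estimate rather than through $K$-finiteness. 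Second, a small slip: $G(F)\backslash G(\mathbb{A})$ (here $G(\mathbb{A})^1=G(\mathbb{A})$ since $G$ is semisimple) is of finite volume but not compact in general; finite volume is all you need to pass from the uniform pointwise bound to the $L^2$-bound.
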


\begin{proof}
Identical to the proof of \cite{JAMS}, Proposition 8.2; it suffices to observe that $Z_{S,D}$ is a subsum of the series defining the height zeta function for rational points considered in \cite{JAMS}.
\end{proof}

\begin{lemma}
Let $\chi$ be a unitary character $G(\mathbb{A}) \rightarrow \mathbb{C}^*$ that is not bi-K-invariant. Then

$$\int_{G(\mathbb{A})}\delta_{S,D}(g)H(s\lambda,g)^{-1}\chi(g) \ dg=0$$

for all $s$ in the domain of convergence and all $\lambda$.
\end{lemma}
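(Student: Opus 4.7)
The plan is to use an orthogonality argument exploiting the bi-$K$-invariance of the integrand together with the left-invariance of Haar measure on $G(\mathbb{A})$. Set $F(g) := \delta_{S,D}(g)\,H(s\lambda,g)^{-1}$ and let $I$ denote the integral in question. The key observation is that $F$ is bi-$K$-invariant, while $\chi$ transforms by a scalar under translation by $K$, so the integral must be fixed by that scalar and hence vanish.

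First, I will verify that $F$ is bi-$K$-invariant. At every place $v$, the local height $H_v(s\lambda,\cdot)$ is bi-$K_v$-invariant by the choice of $K_v$ recorded in Section~2.8.1, and this gives bi-$K$-invariance of $H(s\lambda,\cdot)$ on $G(\mathbb{A})$ after taking the product over $v$. For almost all $v$, the indicator $\delta_v$ of $\mathcal{U}(\mathfrak{o}_v)$ is bi-$K_v$-invariant because $K_v=G(\mathcal{O}_v)$ and the integral model $\mathcal{U}$ is $G$-equivariant over $\mathfrak{o}_{F,S_F}$, so $\mathcal{U}(\mathfrak{o}_v)$ is stable under left and right multiplication by $K_v$. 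For the finitely many remaining $v\notin S$, bi-$K_v$-invariance of $\delta_v$ is part of the standing setup of the maximal compact subgroups in Section~2.8.1. Hence $F(kgk')=F(g)$ for all $k,k'\in K$ and $g\in G(\mathbb{A})$.

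Second, fix any $k\in K$. Since $s$ lies in the domain of absolute convergence, Fubini-type manipulations are justified, and by the left-invariance of Haar measure applied to the function $g\mapsto F(g)\chi(g)$,
$$I=\int_{G(\mathbb{A})}F(g)\chi(g)\,dg=\int_{G(\mathbb{A})}F(kg)\chi(kg)\,dg=\chi(k)\int_{G(\mathbb{A})}F(g)\chi(g)\,dg=\chi(k)\,I,$$
where I used $F(kg)=F(g)$ and the homomorphism property $\chi(kg)=\chi(k)\chi(g)$. Therefore $(1-\chi(k))I=0$ for every $k\in K$.

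Third, since $\chi$ is a continuous homomorphism, the assumption that $\chi$ is not bi-$K$-invariant is equivalent to saying the restriction $\chi|_K$ is nontrivial. Pick $k_0\in K$ with $\chi(k_0)\neq 1$; then the identity above forces $I=0$. The argument is essentially formal, and the only point requiring any care is the bi-$K$-invariance of $\delta_{S,D}$ at the finitely many non-archimedean places where $K_v$ is not simply $G(\mathcal{O}_v)$, which is guaranteed by the conventions of Section~2.8.1.
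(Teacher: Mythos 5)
Your orthogonality argument is the right mechanism and is essentially what lies behind the paper's one-line proof, which simply cites \cite{GMO}, Lemma 4.7(2): absolute convergence justifies the substitution $g\mapsto k_0g$, and for a homomorphism $\chi$ the failure of bi-$K$-invariance is indeed equivalent to $\chi|_K\not\equiv 1$, so $(1-\chi(k_0))I=0$ forces $I=0$ \emph{provided} the integrand $\delta_{S,D}\cdot H(s\lambda,\cdot)^{-1}$ is left $K$-invariant. The gap is in your verification of that proviso. Section~2.8.1 of the paper furnishes compact open subgroups $K_v$ with $H_{L,v}$ bi-$K_v$-invariant only at the \emph{nonarchimedean} places; it says nothing about archimedean $v$, and the paper works with general smooth adelic metrics --- indeed it explicitly advertises removing the $K$-invariance assumption of \cite{JAMS} by invoking \cite{loughran}, and its treatment of the places in $S_F$ (which include the archimedean ones) goes through the general local results of \cite{CLT} rather than a Cartan-decomposition formula. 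So your claim that ``at every place $v$ the local height is bi-$K_v$-invariant by Section~2.8.1'' is not available in this setup. As written, your argument only disposes of characters whose restriction to $K_0=\prod_{v<\infty}K_v$ is nontrivial; a unitary character that is trivial on every finite $K_v$ but nontrivial on $K_\infty$ (such characters exist, since $G(F_v)$ can be disconnected at real places and a character factoring through $\pi_0$ is nontrivial on the maximal compact) is exactly the case your computation cannot reach, and for a non-$K_\infty$-invariant archimedean metric the corresponding archimedean factor has no reason to vanish. Closing this case requires an extra input --- e.g.\ choosing the archimedean metrics to be $K_\infty$-invariant, which is the setting of \cite{GMO} that the paper's citation tacitly imports --- or a genuinely different argument.

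A secondary, smaller point: the bi-$K_v$-invariance of $\delta_v$ at the finitely many finite places where $K_v\neq G(\mathcal{O}_v)$ is not ``part of the standing setup of Section~2.8.1'' --- that subsection concerns only the height functions. It should instead be justified, for instance from the equivariance of the integral model (as you do for almost all $v$), from the Cartan-type description of $\delta_v$ in Section~3.2 at good places together with shrinking $K_v$ or enlarging $S_F$ at the bad ones. This is easily repaired, but it does need saying rather than being attributed to the paper's conventions.
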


\begin{proof}
See \cite{GMO}, Lemma 4.7(2).
\end{proof}

When we set $g=e$, the identity in $G(\mathbb{A})$, we obtain

\begin{equation}
    Z(\textbf{s},e)=\sum_{\chi \in \mathcal{X}}\prod_v \int_{G(F_v)}\delta_v(g_v)H_v(\textbf{s},g_v)^{-1}\chi_v(g_v) \ dg_v +S^\flat(\textbf{s}),
\end{equation}

where $S^\flat(\textbf{s})$ denotes the subsum in Equation (1) corresponding to infinite dimensional representations (restricted to $g=e$). The innermost sum in the definition of $S^\flat(\textbf{s})$ is uniformly convergent for $g$ in compact sets (see the first half of the proof of Lemma 4.4 of \cite{JAMS}). Therefore, we may interchange the innermost summation with the integral over $G(\mathbb{A})$ and find that $S^\flat(\textbf{s})$ equals

$$ \sum_{\chi \in \mathfrak{X}}^\flat \sum_P \dfrac{1}{n(A_P)}\int_{\Pi(M_P)}(\sum_{\phi \in B_P(\pi)_\chi}E(e,\phi)\int_{G(\mathbb{A})}\delta_{S,D}(g)H(\textbf{s},g)^{-1}\overline{E(g,\phi)} \ dg) \ d\pi_P.$$

\subsection{Complexified Height Function.}

We will need a local characterization of integrality in terms of roots. Recall that $E$ is a splitting field of $T$. Given a boundary divisor $D_\alpha$ of the wonderful compactification of $G_E$, the local heights are given at all places $w$ by

$$H_{D_\alpha,w}(g)=|\alpha(t)|_w,$$

where $g=k_1 t k_2$ for $k_1$ and $k_2$ in $G(\mathcal{O}_{E_w})$. Over $E$, we use the following local characterization of integrality $(S,D_\alpha)$-integrality: at all places outside of $S$, the $w$-adic distance to the boundary component $D_\alpha$ is 1. That is, intuitively, we require the contributions to the ``$w$-adic closeness" of the point $P$ to $D_\alpha$ to be in $S$.

The set-theoretic characteristic function $\delta=\delta_{S,D}$ of the set of $(S,D)$-integral points on $G_E$ is defined as follows: For $w \notin S$, let

\[ \delta_w(g_w) =
  \begin{cases*}
    1 \quad& if $\alpha(t_w)=1$ for all $\alpha \in \mathcal{A}_D$ \\
    0 & if otherwise \\
  \end{cases*}\]

where $g_w=k_w t_w k'_w$ with $t_w \in T(E_w), k_w,k'_w \in K_w$.

For $w \in S$, we put $\delta_v \equiv 1$ and write

$$\delta=\prod_v \delta_v.$$

A point $\gamma \in G(E) \subset G(\mathbb{A}_E)$ is $(D,S)$-integral if $\delta(\gamma)$ is equal to 1.

Now we address our situation, in which $G$ is not split over $F$. We recall the local properties of exponential heights:

$$H_{D,v}(P)=(\prod_{w|v}H_{D,w}(P)^{[E_w:F_v]})^{\frac{1}{[E:F]}}.$$

For almost all places $v$, $g_v \in G(F_v)$ can be written as $k_v t_v k'_v$ with $t_v \in S_v(F_v)^+$ and $k_v,k'_v \in K_v$, and
 $$H_{D_\alpha,v}(g_v)=H_{D_\alpha,v}(t_v)=|\alpha(t_v)|_v$$
 
 by the degree formula. Thus, for such places (which we denote by $S_F$) $\delta_v$ is given by
 
 $$\delta_v(g_v)=\prod_{\alpha \in \mathcal{A}_D}|\alpha(t_v)|_v.$$

 The local complexified height function on $G(F_v)$ has the following explicit form:

$$H_v:\mathcal{T}_{G} \times G(F_v) \rightarrow \mathbb{C}$$

$$(\textbf{s},g_v)\mapsto \prod_{\alpha \in \Delta(G_{E_w},T_{E_w})}|\alpha(t_v)|_v^{s_\alpha}.$$

Given $\theta_v \in \Delta(G_{F_v},S_v)$, let $l_v(\theta_v)$ denote the number of $\beta \in \Delta(G_{E_w},T_{E_w})$ with $r_u(\beta)=\theta_v$. For each $\theta_v \in \Delta(G_{F_v},S_v)$, there is a simple root $\beta$ in $\Delta(G_E,T_E)$ such that $r_v(\iota^*_u(\beta))=\theta_v$. We define local parameters by requiring that $s_{\theta_v}$ depend only on the Galois orbit of $\beta$ in $\Delta(G_{E_w},T_{E_w})$. Then, for $\textbf{s} \in \mathcal{T}_{G}$ and $g_v \in G(F_v)$, we have

\begin{equation}
    H_v(\textbf{s},g_v)=\prod_{\theta_v \in \Delta(G_{F_v},S_v)}|\theta_v(t_v)|_v^{l_v(\theta_v)s_{\theta_v}}.
\end{equation}

For $v \notin S_F$, $\delta_v(g_v)=1$ if and only if, when $g_v$ is written as $k_1 t_v k_2$, we have $\theta_v(t_v)=1$ for all $\theta_v \in \Delta(G_{F_v},S_v)$ such that $\theta_v$ is the restriction of a root of $T_{E_w}$ in a Galois orbit corresponding to $D$.

\subsection{Integrals from one dimensional representations.}

\subsubsection{An L-function}

In this section we review some of the notation defined in section 2.8 of \cite{JAMS}. Fix a simple root $\alpha \in \Delta(G_E,T_E)$ and let $\Gamma_\alpha$ be the stabilizer of $\alpha$ in $\Gamma$. Let $E_\alpha$ be the field of definition of $\alpha$, \textit{i.e.}, the fixed field of $\Gamma_\alpha$. We then have a morphism

$$\overset{\vee}\alpha:\mathbb{G}_m \rightarrow T$$

defined over $E_\alpha$, and consequently a continuous homomorphism

$$\overset{\vee}\alpha_{\mathbb{A}}:\mathbb{G}_m(\mathbb{A}_{E_\alpha}) \rightarrow T(\mathbb{A}_{E_\alpha}).$$

Let $N:T(\mathbb{A}_{E_\alpha}) \rightarrow T(\mathbb{A}_F)$ be the norm map, as defined in \cite{JAMS}, section 1.6. Let $\phi_\alpha$ be the composite

    $$\phi_\alpha=N \circ \overset{\vee}\alpha_{\mathbb{A}}:\mathbb{G}_m(\mathbb{A}_{E_\alpha}) \rightarrow T(\mathbb{A}_{E_\alpha}) \rightarrow T(\mathbb{A}_F).$$

If $\chi$ is a character of $T(\mathbb{A}_F)/T(F)$, then $\xi_\alpha(\chi)=\chi \circ \phi_\alpha$ is a Hecke character. For $w \in \mathrm{Val}(E_\alpha)$, define $\xi_\alpha(\chi)_w$ by $\xi_\alpha(\chi)=\prod_{w \in \text{Val}(E_\alpha)}\xi_\alpha(\chi)_w$.

Let $\alpha \in \Delta(G_E,T_E)$, and let $\mathfrak{O}=\Gamma \cdot \alpha$ be the orbit of $\alpha$. The Hecke L-function $L(s,\xi_\beta(\chi))$ depends only on the Galois orbit $\mathfrak{O}$, and not on the particular $\beta$. For this reason, we denote the $L$-function $L(s,\xi_\beta(\chi))$ by $L(s,\xi_\mathfrak{O}(\chi))$. Suppose $(E_\alpha)_w/F_v$ is unramified and that $\xi_\alpha(\chi)_w$ is unramified. Let

$$L_w(s,\xi_{\alpha}(\chi)_w)=(1-\xi_\alpha(\chi)_w(\varpi_w)q_w^{-s})^{-1},$$

where $\varpi_w$ is a prime element of $(E_\alpha)_w$. Then with the above notations,
$$L_w(s,\xi_\alpha(\chi)_w)=(1-\chi_v(\theta^\vee_v(\varpi_v))q_v^{-l_v(\theta_v)s})^{-1}.$$

\subsubsection{Infinite product}

\begin{theorem}
Let $\chi=\otimes'_v \chi_v$ be a one-dimensional unramified automorphic representation of $G(\mathbb{A})$. 
 There exists a function $f_{S,\chi}$, which depends only on $(s_\alpha)_{\alpha \notin \mathcal{A}_D},$ is holomorphic in $\mathcal{T}^D_{-1/2}$, uniformly bounded in $\mathcal{T}^D_{-1/2+\epsilon}$, for any $\epsilon>0$, and such that

\[\int_{G(\mathbb{A}_{S_D \cup S_F})}\delta_{D,S}(g)H_S(\textbf{s},g)^{-1}\chi(g) \ dg = \prod_{\mathfrak{O} \notin \mathcal{A}_D}L^{S_\mathfrak{O}}(s_\mathfrak{O}-\kappa_\mathfrak{O}, \xi_{\mathfrak{O}}(\chi)) \cdot f_{S,\chi}(\textbf{s}).\]
\end{theorem}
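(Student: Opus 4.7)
The plan is to reduce the adelic integral to a genuine Euler product over places $v\notin S_D\cup S_F$, compute each local factor explicitly by the Cartan decomposition, and isolate the product of global $L$-factors from an Euler product that is still absolutely convergent in a strictly larger region. This follows the template of \cite{JAMS}, Section 9, but tracking the $\delta_{D,S}$ constraint and the nonsplit structure of $G$.

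\textbf{Step 1 (factorization).} For $v\notin S_D\cup S_F$ the three factors $\delta_v$, $H_v(\mathbf{s},\cdot)^{-1}$, and $\chi_v$ are bi-$K_v$-invariant ($\chi$ is everywhere unramified, and $K_v=G(\mathcal{O}_v)$ for such $v$). Hence the integral factorizes as $\prod_v I_v(\mathbf{s})$, where
\[
I_v(\mathbf{s})=\int_{G(F_v)}\delta_v(g)H_v(\mathbf{s},g)^{-1}\chi_v(g)\,dg_v.
\]

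\textbf{Step 2 (local computation).} Because $v\notin S_F$, the group $G$ is unramified at $v$ and the Cartan decomposition $G(F_v)=K_v S_v(F_v)^+ K_v$ holds with no $\Omega_v$ factor. Together with $\mathrm{vol}(K_v)=1$ this lets me replace $I_v$ by a sum over $t\in S_v(F_v)^+/S_v(\mathcal{O}_v)$ weighted by the standard Macdonald density. Parameterize $t$ by the non-negative integers $n_{\theta_v}=-v(\theta_v(t))$ indexed by $\theta_v\in\Delta(G_{F_v},S_v)$. From equation (3) the integrand carries the factor $\prod_{\theta_v}q_v^{-l_v(\theta_v)s_{\theta_v}n_{\theta_v}}$, while the characterization of $\delta_v$ at the end of Section 3.2 forces $n_{\theta_v}=0$ for every $\theta_v$ whose defining Galois orbit lies in $\mathcal{A}_D$. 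The character contributes $\prod_{\theta_v}\chi_v(\theta_v^\vee(\varpi_v))^{n_{\theta_v}}$.

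\textbf{Step 3 (geometric series and $L$-factors).} The resulting sum separates as a product of geometric series in the variables $q_v^{-l_v(\theta_v)s_{\theta_v}}\chi_v(\theta_v^\vee(\varpi_v))$, one factor for each $\theta_v$ corresponding to an orbit $\mathfrak{O}\notin\mathcal{A}_D$. The Macdonald density contributes a polynomial correction in these same variables coming from the product over the full positive restricted root system, in which the exponents arising from $2\rho$ are exactly the $\kappa_\alpha$ of Section 2.3. Matching monomials with the explicit form
\[
L_w(s,\xi_\alpha(\chi)_w)=\bigl(1-\chi_v(\theta_v^\vee(\varpi_v))q_v^{-l_v(\theta_v)s}\bigr)^{-1}
\]
recorded at the end of Section 2.9, I extract the leading poles and obtain
\[
I_v(\mathbf{s})=\prod_{\mathfrak{O}\notin\mathcal{A}_D}L_v\bigl(s_{\mathfrak{O}}-\kappa_{\mathfrak{O}},\xi_{\mathfrak{O}}(\chi)\bigr)\cdot f_{v,\chi}(\mathbf{s}),
\]
with $f_{v,\chi}$ a rational function in the variables $q_v^{-s_\theta}$ that depends on $s_\alpha$ only for $\alpha\notin\mathcal{A}_D$ (those exponents are frozen at zero).

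\textbf{Step 4 (global analytic properties).} Expanding $f_{v,\chi}$ shows that in the region $\mathcal{T}^D_{-1/2+\epsilon}$ one has $f_{v,\chi}(\mathbf{s})=1+O(q_v^{-1-\eta})$ for some $\eta=\eta(\epsilon)>0$, the key point being that the surviving simple-root factors contribute at worst a simple pole on $\mathrm{Re}(s_\alpha)=\kappa_\alpha+1$ which has already been removed into the $L$-factor. Consequently $f_{S,\chi}:=\prod_{v\notin S_D\cup S_F}f_{v,\chi}$ converges absolutely to a holomorphic function on $\mathcal{T}^D_{-1/2}$, and is uniformly bounded on $\mathcal{T}^D_{-1/2+\epsilon}$ for every $\epsilon>0$. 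Multiplying by the partial Hecke $L$-functions yields the claimed formula.

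\textbf{Main obstacle.} The technically delicate step is Step 3: one must evaluate the local integral with enough precision to identify exactly which simple poles are present and which cancellations occur between the Macdonald density and the bilinear form defining $H_v^{-1}\cdot\chi_v$. Only after this precise bookkeeping can one verify that the remainder $f_{v,\chi}$ is holomorphic on the half-plane $\mathrm{Re}(s_\alpha)>\kappa_\alpha+1/2$ rather than merely on $\mathrm{Re}(s_\alpha)>\kappa_\alpha+1$, which is what makes the global Euler product converge in $\mathcal{T}^D_{-1/2}$. Everything else is an adaptation of \cite{JAMS}, with the cases $\alpha\in\mathcal{A}_D$ and $\alpha\notin\mathcal{A}_D$ handled in parallel.
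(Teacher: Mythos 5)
Your proposal is correct and follows essentially the same route as the paper: Cartan decomposition at the unramified places with $\mathrm{vol}(K_v t K_v)$ replaced by $\delta_{B_v}(t)$ up to an $O(q_v^{-1})$ error, the $\delta_v$-constraint freezing the variables attached to orbits in $\mathcal{A}_D$, identification of the surviving geometric series with the local factors $L_w(s_\mathfrak{O}-\kappa_\mathfrak{O},\xi_\mathfrak{O}(\chi))$, and absolute convergence of the remainder Euler product in $\mathcal{T}^D_{-1/2}$. The only cosmetic difference is that you invoke the exact Macdonald volume formula where the paper uses the cruder bound $|\mathrm{vol}(K_v t_v(\textbf{a})K_v)-\delta_{B_v}(t_v(\textbf{a}))|\ll q_v^{-1}\delta_{B_v}(t_v(\textbf{a}))$, which suffices for the same conclusion.
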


Here, given an orbit $\mathfrak{O}$, there corresponds a subfield of $E$. We let $S_\mathfrak{O}$ be the finite set of places such that if $w \notin S_\mathfrak{O}$, then $w$ does not lie above any place $v \in S_F$.

\begin{proof}
Fix a place $v$ of $F$ such that $v \notin S_D \cup S_F$. For each $\theta_v \in \Delta(G_{F_v},S_v)$, there is a simple root $\beta$ in $\Delta(G_E,T_E)$ such that $r_v(\iota^*_u(\beta))=\theta_v$. We require that $s_{\theta_v}$ depend only on the Galois orbit of $\beta$ in $\Delta(G_E,T_E)$.

Starting with an element $\textbf{s}=(s_\alpha)_\alpha \in \mathcal{T}_{G}$ and $v \notin S_F$, we obtain a tuple $\textbf{s}^v=(s_\theta^v)$ indexed by $\Delta(G_{F_v},S_v)$ by setting $s^v_{r_v(\iota^*(\alpha))}=s_\alpha$; this is well-defined. For $\textbf{s},\textbf{t} \in \mathcal{T}_{G}$ and $v \notin S_F$, we set

$$\langle{\textbf{s},\textbf{t} \rangle}_v=\sum_{\theta_v \in \Delta(G_{F_v},S_v)}s_\theta^v t_\theta^v.$$

For any vector $\textbf{a}=(a_\alpha)_\alpha \in \mathcal{T}(\mathbb{N})$, we set

\[t_v(\textbf{a})=\prod_{\theta \in \Delta(G_v,S_v)}\theta^\vee(\varpi_v)^{a_\theta^v}.\]

By the Cartan decomposition, $$\int_{G(\mathbb{A}_{S_D \cup S_F})}\delta_{D,S}(g)H_S(\textbf{s},g)^{-1}\chi(g) \ dg =
    \sum_{w \in S_v(F_v)^+} \delta_v(w)H_v(\textbf{s},w)^{-1}\chi_v(w)\mathrm{vol}(K_v w K_v).
$$

The above sum is equal to

\[\sum_{\textbf{a} \in \mathcal{T}(\mathbb{N})} \delta_v(t_v(\textbf{a}))q_v^{-\langle{\textbf{s},\textbf{a} \rangle}_v}\chi_v(t_v(\textbf{a}))\mathrm{vol}(K_v t_v(\textbf{a})K_v).\]

This can be rewritten as

\[\sum_{\textbf{a}  \in \mathcal{T}(\mathbb{N})} \delta_v(t_v(\textbf{a}))q_v^{-\langle{\textbf{s}-2\rho,\textbf{a} \rangle}_v}\chi_v(t_v(\textbf{a}))+b_v(\textbf{s}),\] where

\[b_v(\textbf{s})=\sum_{\textbf{a}  \in \mathcal{T}(\mathbb{N})} \delta_v(t_v(\textbf{a}))q_v^{-\langle{\textbf{s},\textbf{a}\rangle}_v}\chi_v(t_v(\textbf{a}))(\mathrm{vol}(K_v t_v(\textbf{a})K_v)-\delta_{B_v}(t_v(\textbf{a}))).\]


\vspace{3mm}










First, we must bound the infinite product

\[\prod_{v \notin S_D \cup S_F} \sum_{\textbf{a}  \in \mathcal{T}(\mathbb{N})} \delta_v(t_v(\textbf{a}))q_v^{-\langle{\textbf{s}-2\rho,\textbf{a} \rangle}_v}\chi_v(t_v(\textbf{a})).\]

This expression is equal to

\[\prod_{v \notin S_D \cup S_F}\prod_{\theta \in \mathcal{A}_v - \mathcal{A}_{D_v}} \sum_{a_\theta^v=0}^\infty \chi_v(\theta^\vee(\varpi_v)^{a_\theta})q_v^{-(s_\theta^v-\kappa_\theta^v)a_\theta^v l(\theta)}\]

\[=\prod_{v \notin S_D \cup S_F}\prod_{\theta \in \mathcal{A}_v-\mathcal{A}_{D_v}}(1-\chi_v(\theta^\vee(\varpi_v))q_v^{-(s_\theta-\kappa_\theta)l(\theta)})^{-1}.\]

By \cite{JAMS} Proposition 2.9, this infinite product is equal to

\[\prod_{\mathfrak{O} \in \mathcal{A}-\mathcal{A}_D}L^{S_\mathfrak{O}}(s_\mathfrak{O}-\kappa_\mathfrak{O},\xi_\mathfrak{O}(\chi)).\]

Now we turn to $\sum_{v \notin S_D \cup S_F} b_v(\textbf{s})$. Let $\sigma=(\text{Re}(s_\alpha))_\alpha$. Observe that in the definition of $b_v$ we may assume that $\textbf{a} \neq \underline{0}$. Since for each $v \notin S_F \cup S_D$,

$$\{\textbf{a}|\textbf{a} \neq \underline{0}\}=\bigcup_{\theta \in \Delta(G_{F_v},S_v)}\{\textbf{a}:a_\theta^v \neq 0\},$$

we have

\[\sum_{v \notin S_D \cup S_F}|b_v(\textbf{s})| \leq \sum_{v \notin S_D \cup S_F} \sum_\theta \sum_{a_\alpha \neq 0} \delta_v(t_v(\textbf{a}))q_v^{-\langle{\sigma,\textbf{a}\rangle}_v}|(\text{vol}(K_v t_v(\textbf{a})K_v) - \delta_{B_v}(t_v(\textbf{a})))|.\]

\[\ll\sum_{v \notin S_D \cup S_F} q_v^{-1}\sum_{\alpha \notin \mathcal{A}_D}\sum_{a_\alpha \neq 0} q_v^{-\langle{\sigma,\textbf{a}\rangle}_v}\delta_{B_v}(t_v(\textbf{a}))\]

\[=\sum_{v \notin S}q_v^{-1}\sum_{\theta \notin \mathcal{A}_D} (\sum_{a_\theta = 1}^\infty q_v^{-(\sigma_\alpha-\kappa_\theta)a_\theta l(\theta)})\prod_{\beta \neq \theta, \beta \notin \mathcal{A}_D}(\sum_{a_\beta=0}^\infty q_v^{-(\sigma_\beta-\kappa_\beta)a_\beta l(\beta)})\]

\[=\sum_{v \notin S_D \cup S_F}q_v^{-1}\sum_{\theta \notin \mathcal{A}_D}\dfrac{q_v^{-(\sigma_\theta-\kappa_\theta)a_\theta l(\theta)}}{\prod_{\beta \notin \mathcal{A}_D}(1-q_v^{-(\sigma_\beta-\kappa_\beta)l(\beta)})}\]

\[\ll\sum_{\theta \notin \mathcal{A}_D}\sum_{v \notin S_D \cup S_F}q_v^{-3/2}<\infty.\]

We need to show the existence of a $C>0$ such that $|1+a_v| \geq C>0$ for all $v$. For this,

$$|1+a_v| \geq \prod_{\theta \in \mathcal{A}_D}\dfrac{1}{1+q_v^{-\sigma_\beta+\kappa_\beta}} \geq \prod_{\theta \notin \mathcal{A}_D}\dfrac{1}{2} \geq \dfrac{1}{2^r},$$

with $r=|\Delta(G_E,T_E) \backslash \mathcal{A}_D|.$ For $\textbf{s} \in \mathcal{T}^D_{-1/2+\epsilon}$ the estimates are uniform, i.e. the quotient

$$\dfrac{\prod_{v \notin S} I_v(\chi)}{\prod_{v \notin S_D \cup S_F}(1+a_v)}$$

is holomorphic in $\mathcal{T}^D_{-1/2+\epsilon}$. This completes the proof.
\end{proof}

\subsubsection{Places in $S_D \backslash S_F$.}
 
 \begin{proposition}
  For each $v \in S_D \backslash S_D \cap S_F$ there exists a function $f_{\chi_v}$, holomorphic and uniformly bounded in $\mathcal{T}_{-1-\delta}$, for some $\delta>0$, such that

$$\int_{G(F_v)}H_v(\textbf{s},g_v)^{-1}\chi_v(g_v) \ dg_v = \prod_{\mathfrak{O} \in \mathcal{A}}L_{w_\mathfrak{O}}(s_\mathfrak{O}-\kappa_\mathfrak{O},\xi_{\mathfrak{O}}(\chi)_{w_\mathfrak{O}}) \cdot f_{\chi_v}(\textbf{s}),$$

where $L_{w_\mathfrak{O}}$ is the local factor of the Hecke L-function $L(s_\mathfrak{O},\xi_\mathfrak{O}(\chi))$.

 \end{proposition}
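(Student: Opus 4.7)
The plan is to mirror the argument in the proof of the preceding Theorem, specialized to a single local place. Since $v \in S$, we have $\delta_v \equiv 1$, so integrality imposes no constraint. Moreover, because $v \notin S_F$, the unramified Cartan decomposition $G(F_v) = K_v S_v(F_v)^+ K_v$ is available, and the integrand is bi-$K_v$-invariant because $\chi$ is a bi-$K$-invariant automorphic character (the only characters that survive by Lemma 2) and $H_v(\mathbf{s},\cdot)$ is bi-$K_v$-invariant by construction. Thus the integral reduces to the discrete sum
\[\int_{G(F_v)} H_v(\mathbf{s},g_v)^{-1}\chi_v(g_v)\,dg_v = \sum_{\mathbf{a} \in \mathcal{T}(\mathbb{N})} q_v^{-\langle \mathbf{s},\mathbf{a}\rangle_v}\chi_v(t_v(\mathbf{a}))\,\mathrm{vol}(K_v t_v(\mathbf{a}) K_v),\]
indexed over the full cone, not just over coordinates outside $\mathcal{A}_D$.

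Next I would split $\mathrm{vol}(K_v t_v(\mathbf{a})K_v)$ as the Macdonald-type leading term $\delta_{B_v}(t_v(\mathbf{a}))^{-1}$ plus a remainder of size $O(q_v^{-1}\delta_{B_v}(t_v(\mathbf{a}))^{-1})$, exactly as in the proof of the preceding Theorem. The leading part decouples into a product of geometric series over $\theta \in \Delta(G_{F_v},S_v)$, giving
\[\prod_{\theta \in \Delta(G_{F_v},S_v)}\bigl(1-\chi_v(\theta^\vee(\varpi_v))q_v^{-(s_\theta-\kappa_\theta)l(\theta)}\bigr)^{-1}.\]
Under the identification between $\Delta(G_{F_v},S_v)$ and $\Gamma_v$-orbits in $\Delta(G_E,T_E)$ (the $*$-action from Section 2.3.3) together with the local computation of $L_w(s,\xi_\alpha(\chi)_w)$ recorded in Section 2.3.1, this product equals exactly $\prod_{\mathfrak{O} \in \mathcal{A}} L_{w_\mathfrak{O}}(s_\mathfrak{O} - \kappa_\mathfrak{O}, \xi_\mathfrak{O}(\chi)_{w_\mathfrak{O}})$.

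The remaining step is to control the error term $b_v(\mathbf{s})$. Using the bound on $\mathrm{vol}(K_v t_v(\mathbf{a})K_v) - \delta_{B_v}(t_v(\mathbf{a}))^{-1}$, one obtains a sum absolutely convergent on a region strictly larger than the convergence region of the L-factors; explicitly, in $\mathcal{T}_{-1-\delta}$ for a suitable $\delta>0$ the remainder is holomorphic and uniformly bounded. Dividing the whole integral by the product of local L-factors then yields the desired function $f_{\chi_v}(\mathbf{s})$, and a routine verification shows the denominator has no zeros in $\mathcal{T}_{-1-\delta}$ (the same "$|1+a_v|\geq 2^{-r}$" estimate from the proof of the Theorem applies pointwise). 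The main obstacle, compared to the global Theorem, is that here every simple root orbit $\mathfrak{O} \in \mathcal{A}$ contributes poles rather than only those in $\mathcal{A} \setminus \mathcal{A}_D$: one must keep track of the $\mathcal{A}_D$ directions as well in verifying that the remainder's enlarged domain of holomorphy genuinely reaches $\mathcal{T}_{-1-\delta}$, which is why the numerical bound on the excess $q_v^{-1}$ in the Macdonald formula is essential.
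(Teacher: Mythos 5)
Your reduction via the Cartan decomposition, the identification of the leading term $\sum_{\mathbf a} q_v^{-\langle \mathbf s-2\rho,\mathbf a\rangle_v}\chi_v(t_v(\mathbf a))$ with $\prod_{\mathfrak O\in\mathcal A}L_{w_\mathfrak O}(s_\mathfrak O-\kappa_\mathfrak O,\xi_\mathfrak O(\chi)_{w_\mathfrak O})$, and the observation that $\delta_v\equiv 1$ forces all orbits (not just those outside $\mathcal A_D$) to contribute, all match the computation the paper appeals to. The gap is in the continuation step. You claim that the bound $\mathrm{vol}(K_vt_v(\mathbf a)K_v)-\delta_{B_v}(t_v(\mathbf a))^{-1}=O\bigl(q_v^{-1}\delta_{B_v}(t_v(\mathbf a))^{-1}\bigr)$ makes the remainder $b_v(\mathbf s)$ absolutely convergent ``on a region strictly larger than the convergence region of the L-factors,'' indeed holomorphic and bounded on $\mathcal T_{-1-\delta}$. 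This is false: the $q_v^{-1}$ saving is in the $q_v$-aspect only (it is what makes $\sum_v|b_v|$ converge in the global Theorem, where one needs uniformity in $v$ on $\mathcal T^D_{-1/2}$), but it gives no extra decay in the cone variable $\mathbf a$, so at a fixed place the error series has exactly the same abscissa $\mathrm{Re}(s_\theta)>\kappa_\theta$ as the main term, and its continuation has poles along the very hyperplanes where the local L-factors have theirs. Concretely, for $\mathrm{PGL}_2$ one has $\mathrm{vol}(K\varpi^nK)=q^n+q^{n-1}$ for $n\geq 1$, so $b_v(s)=q_v^{-1}q_v^{-(s-1)}\bigl(1-q_v^{-(s-1)}\bigr)^{-1}$, which has a pole at $s=\kappa_\alpha=1$, a point lying inside $\mathcal T_{-1-\delta}$. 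So your mechanism cannot produce the holomorphy of $f_{\chi_v}$ on $\mathcal T_{-1-\delta}$; likewise the ``$|1+a_v|\geq 2^{-r}$'' estimate you import is a nonvanishing statement for the Euler-product quotient in the global Theorem and is not the relevant point here, since the local L-factors have no zeros and the real issue is continuation of the (series-defined) integral past the polar hyperplanes.

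What does give the Proposition is the finer structure of the Cartan volumes rather than just their size: by Macdonald's formula, $\mathrm{vol}(K_vt_v(\mathbf a)K_v)=q_v^{\langle 2\rho,\mathbf a\rangle}\,c_J(q_v^{-1})$, where the factor $c_J$ depends only on the face $J\subseteq\Delta(G_{F_v},S_v)$ of the cone containing $\mathbf a$ (equivalently, on which coordinates $a_\theta$ vanish). Grouping the sum by faces, each piece is an explicit finite product of geometric series, so the whole local integral is a rational function of the variables $q_v^{-s_\theta}$ whose denominator divides $\prod_\theta\bigl(1-\chi_v(\theta^\vee(\varpi_v))q_v^{-(s_\theta-\kappa_\theta)l_v(\theta)}\bigr)$. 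Hence $f_{\chi_v}(\mathbf s)$, defined as the integral divided by the L-factors, is a polynomial in the $q_v^{-s_\theta}$ (in the $\mathrm{PGL}_2$ example, $1+q_v^{-s}$), and is therefore entire and uniformly bounded on $\mathcal T_{-1-\delta}$. With this replacement for your error-term paragraph, the rest of your argument goes through and coincides with the analysis the paper invokes (the proof of the preceding Theorem with $\delta_v\equiv 1$).
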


\begin{proof} The result follows from the same analysis as for places $v \notin S_F$, with $\delta_v \equiv 1$.
\end{proof}

\subsubsection{Local integrals for places in $S_F$.} 

We turn to places $v \in S_F$. For the analysis of these functions we rely on a general result of \cite{CLT}. If $v \in S_F \notin S_D$, then by Proposition 4.4 in \cite{CLT} the rightmost pole is at

$$\max_{\alpha \notin \mathcal{A}_D}\dfrac{\kappa_\alpha}{\lambda_\alpha}.$$

Such integrals will not contribute to the rightmost pole of the height zeta function. If $v \in S_F \cap S_D$, the analysis in \cite{CLT} shows that the rightmost pole is at

$$\max_{\alpha \in \mathcal{A}}\dfrac{\kappa_\alpha}{\lambda_\alpha}$$

and that the order of the pole is $1+\dim \mathcal{C}^{\text{an}}_{F_v,\lambda}(D)$. They also establish analytic continuation to the left of the pole.

\subsection{Integrals from infinite dimensional representations.}

\begin{lemma}
(1) Let $H$ be a connected reductive group over a number field $F$. Let $v$ be a place of $F$ such that $H(F_v)$ is not compact modulo center. Let $\pi$ be an automorphic representation of $H(\mathbb{A}_F)$. If $\pi_v$ is one-dimensional then $\pi$ is one-dimensional.

(2) Let $F$ be a non-archimedean local field. Let $G$ be a connected reductive group over $F$. If the $F$-simple factors of the derived group of $G$ are $F$-isotropic, then any irreducible smooth representation $V$ of $G(F)$ is either one-dimensional or infinite-dimensional.
\end{lemma}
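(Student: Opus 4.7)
My plan is to prove (2) first, since it is purely local and supplies the key input for (1). Let $V$ be a finite-dimensional irreducible smooth representation of $G(F)$. Under the isotropy hypothesis, Borel--Tits and Kneser--Tits give that the subgroup $G^{\mathrm{der}}(F)^+$ generated by $F$-rational unipotent elements is perfect and of finite index in $G^{\mathrm{der}}(F)$. I will show $G^{\mathrm{der}}(F)^+$ acts trivially on $V$: each unipotent root subgroup $U_\alpha(F) \cong F^{n_\alpha}$ acts smoothly, so its kernel is open; since $F$ modulo an open subgroup is a divisible group (a direct sum of Pr\"ufer groups $\mathbb{Z}(p^\infty)$), the image in $\mathrm{GL}(V)$ is discrete and divisible, which by an elementary eigenvalue argument combined with Kolchin's theorem forces the subgroup of $\mathrm{GL}(V)$ generated by all $U_\alpha(F)$-images to be unipotent; perfectness then makes it trivial. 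Thus $V$ descends to the abelian quotient $G(F)/G^{\mathrm{der}}(F)^+$, and irreducibility forces $\dim V = 1$.

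For (1), the plan is to exploit that $\pi_v = \chi_v$ being one-dimensional imposes a strong equivariance on every vector of $V_\pi$, which combined with left $H(F)$-invariance and strong approximation will force the whole representation to be a character. Fix a smooth vector $\phi \in V_\pi$; since $\pi_v$ acts as a scalar, every such $\phi$ satisfies $\phi(g \cdot \iota_v(h_v)) = \chi_v(h_v)\, \phi(g)$ for all $h_v \in H(F_v)$, where $\iota_v(h_v)$ denotes the adele concentrated at $v$. Because $H(F_v)$ is non-compact modulo center, $H^{\mathrm{der}}$ is $F_v$-isotropic, and by part (2) (or directly from Kneser--Tits, which makes $\widetilde{H^{\mathrm{der}}}(F_v)$ perfect) $\chi_v$ is trivial on the image of $\widetilde{H^{\mathrm{der}}}(F_v) \to H(F_v)$. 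Writing $g = (g_v, g^v) \in H(F_v) \times H(\mathbb{A}^v)$ and combining left $H(F)$-invariance with the right equivariance at $v$, one finds for every $\gamma$ in the image of $\widetilde{H^{\mathrm{der}}}(F)$ the identity $\phi(1, \gamma h') = \phi(1, h')$ for all $h' \in H(\mathbb{A}^v)$.

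Next I invoke strong approximation: the simply connected group $\widetilde{H^{\mathrm{der}}}$ is $F_v$-isotropic, so $\widetilde{H^{\mathrm{der}}}(F)$ is dense in $\widetilde{H^{\mathrm{der}}}(\mathbb{A}^v)$. Smoothness of $\phi$ upgrades the invariance to left $\widetilde{H^{\mathrm{der}}}(\mathbb{A}^v)$-invariance of $h' \mapsto \phi(1, h')$, so this function descends to the abelian adele quotient $(H/H^{\mathrm{der}})(\mathbb{A}^v)$. Since elements of $V_\pi$ are determined, via the $\chi_v$-equivariance at $v$, by such a function on an abelian quotient, irreducibility of $V_\pi$ as an $H(\mathbb{A})$-module forces $\dim V_\pi = 1$. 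The hard part will be the passage between $H^{\mathrm{der}}$ and its simply connected cover $\widetilde{H^{\mathrm{der}}}$: both strong approximation and the perfectness statement from Kneser--Tits apply cleanly only on the cover, so some bookkeeping is required for the finite central isogeny kernel, but this contributes only an abelian twist which does not affect the final one-dimensionality.
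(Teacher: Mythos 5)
The paper does not actually prove this lemma: part (1) is quoted from Kim--Shin--Templier (Lemma 6.2 of \cite{shin}) and part (2) from Conrad's notes (Prop.\ 3.9 of \cite{conrad}). So a self-contained argument would be welcome, but both halves of your sketch have genuine gaps at their decisive steps.

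In (2), the image of $U_\alpha(F)$ in $\mathrm{GL}(V)$ is indeed divisible and $p$-power torsion (a quotient of $F^{n_\alpha}$ by an open subgroup), but precisely for that reason every element of it has finite order and is therefore \emph{semisimple}; Kolchin's theorem applies to groups consisting of unipotent elements, so invoking it here is circular --- ``unipotent'' for these images can only mean ``trivial'', which is what you are trying to prove. The claim that the image is ``discrete'' is unjustified and fails in the basic model case: a nontrivial smooth character $F\to\mathbb{C}^\times$ is a smooth finite-dimensional representation of $U_\alpha(F)\cong F$ whose image is $\mu_{p^\infty}\subset \mathrm{GL}_1(\mathbb{C})$, divisible, torsion, and not discrete. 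No argument that looks at one root group in isolation can kill its image; what does the work is the ambient structure, which your sketch never uses: the kernel of $G(F)\to \mathrm{GL}(V)$ is open \emph{and normal}, and conjugating $u\in U_\alpha(F)$ by suitable $F$-points of the maximal split torus contracts $u$ into any neighborhood of the identity, hence into the open kernel, hence $u\in\ker$ by normality; this shows the kernel contains $G(F)^+$, and one concludes via Kneser--Tits/Borel--Tits. You also need, and do not supply, a reason why an irreducible smooth representation trivial on $G(F)^+$ is one-dimensional, i.e.\ why the relevant quotient of $G(F)$ by $G(F)^+$ acts through an abelian group; this is a known but nontrivial point.

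In (1), you pass from ``$H(F_v)$ is not compact modulo center'' to perfectness of $\widetilde{H^{\mathrm{der}}}(F_v)$ via Kneser--Tits and to strong approximation for $\widetilde{H^{\mathrm{der}}}$ relative to $\{v\}$. Both of these require \emph{every} $F$-simple factor of $\widetilde{H^{\mathrm{der}}}$ to be isotropic at $v$; the hypothesis only guarantees that at least one factor is. For a factor $H_i$ with $H_i(F_v)$ compact, $H_i(F_v)$ need not be perfect (so $\chi_v$ need not be trivial on its image) and $H_i(F)$ is not dense in $H_i(\mathbb{A}^v)$, so strong approximation away from $v$ fails for it. Consequently your argument only produces invariance under the adelic points of the product of the factors that are isotropic at $v$, and the final irreducibility step does not force one-dimensionality in the remaining directions; the issue is already visible for $H=\mathrm{SL}_2\times \mathrm{SL}_1(D)$ with $D$ a quaternion algebra ramified at $v$, where your argument says nothing about the second factor. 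This is exactly where the cited Lemma 6.2 of \cite{shin} has to do real work, whereas the isogeny bookkeeping between $H^{\mathrm{der}}$ and its simply connected cover, which you flag as the hard part, is comparatively routine.
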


\begin{proof}
For (1), see Lemma 6.2 in \cite{shin}. For (2), see \cite{conrad}, Proposition 3.9.
\end{proof}

For the integrals

$$\int_{G(\mathbb{A})}\delta_{S,D}(g)H(\textbf{s},g)^{-1}E(g,\phi) \ dg,$$

we will follow and briefly sketch the argument presented in section 4.5 of \cite{loughran}. At any finite place $v$ we let $C_c^\infty(G(F_v))$ denote, as usual, the space of functions on $G(F_v)$ that are locally constant and of compact support. For archimedean $v$ we require such functions to be smooth and of compact support. The set $C_c^\infty(G(F_v))$ forms a convolution algebra $\mathcal{H}(G(F_v))$ with respect to the measure $dg_v$. For each place $v$ we define idempotents $\xi_v$ as in section 4.5 of \cite{loughran}. The global Hecke algebra $\mathcal{H}(G(\mathbb{A}))$ is the space of finite linear combinations of functions $\otimes_v \varphi_v$, where $\varphi_v \in \mathcal{H}(G(F_v))$ and $\varphi_v$ is $\xi_v$ for almost all $v$. 

Since $\delta \cdot H$ is invariant on the left and right under the compact open subgroup $K_v$ for each non-archimedean place $v$, there is an associated idempotent $\xi_0=\otimes'_{v \text{non-arch.}}\xi_v$ in the Hecke algebra $\otimes'_{v \text{non-arch.}}\mathcal{H}(G(k_v))$ such that $\xi_0 * (\delta \cdot H)=(\delta \cdot H) * \xi_0$. By a theorem of Harish-Chandra, $\sum_{W \in \hat{K}_\infty} \xi_W * H_\infty$ converges in the topology of $C^\infty(G_\infty)$ to $H_\infty$. Therefore, we may rewrite the integral above as

$$\int_{G(\mathbb{A})}(\sum_{W \in \hat{K}_\infty}\xi_W \otimes \xi_0)H(\textbf{s},g)^{-1}E(g,\phi) \ dg =\sum_{W \in \hat{K}_\infty}\int_{G(\mathbb{A})}(\xi_W \otimes \xi_0) * H(\textbf{s},g))^{-1}E(g,\phi) \ dg$$

$$=\sum_{W \in \hat{K}_\infty}\int_{G(\mathbb{A})}H(g,\textbf{s})^{-1}(\xi_W \otimes \xi_0) * E(g,\phi) \ dg.$$

We define

$$M_\xi(g,\phi)=(\xi * E)(g,\phi).$$

For groups of rank at least two we will use uniform bounds on matrix coefficients obtained by H. Oh. The same result is expected to hold in general by adapting the proof of Theorem 4.5 in \cite{JAMS}.

We shall use the following result - see \cite{loughran}, Lemma 4.7. Suppose we are given a strongly orthogonal system $\mathcal{S}_v$ in $G(k_v)$ for each place $v$. We continue to use the notation in section 4.5 of \cite{loughran}. Let $\xi$ be a non-trivial idempotent in the global Hecke algebra. 

\begin{lemma}
There is a constant $C_\xi$, depending only on the idempotent $\xi$, such that
\end{lemma}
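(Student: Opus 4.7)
The plan is to establish a uniform bound on $M_\xi(g,\phi) = (\xi * E)(g,\phi)$ in terms of the Harish-Chandra $\Xi$-function (or an analogous function built from the strongly orthogonal system $\mathcal{S}_v$), with the constant depending only on the idempotent $\xi$, not on the cuspidal datum $(P,\pi,\phi)$. The strategy follows Loughran--Tanimoto--Takloo-Bighash (their Lemma 4.7), which in turn adapts Oh's uniform pointwise estimates on matrix coefficients.

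The first step is to reinterpret $M_\xi$ as a matrix coefficient. Writing $\xi = \xi_0 \otimes \xi_W$ with $\xi_0$ the non-archimedean part and $\xi_W$ cutting out a fixed (finite) set of $K_\infty$-types, the convolution $\xi * E(\cdot, \phi)$ is essentially an Eisenstein series built from a vector that lies in a \emph{fixed}, finite-dimensional subspace of the representation space $H_P(\pi)$ determined by $\xi$ alone. By standard bounds on Eisenstein series in the tempered range (or on the unitary axis, using Langlands's square-integrability and analytic continuation), one reduces to bounding the matrix coefficient $\langle I_P(\pi_\zeta)(g)\phi, \phi' \rangle$ for $\phi, \phi'$ in the $\xi$-isotypic component.

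Next, I would apply the strong orthogonality of $\mathcal{S}_v$ together with Oh's uniform pointwise bound: for semisimple groups of $F_v$-rank at least $2$, one has an estimate $|\langle \pi_v(g_v) u, v\rangle| \leq C(\dim Ku, \dim Kv)\,\Xi_{G(F_v)}(g_v)$ that holds uniformly across all unitary representations of $G(F_v)$ without non-trivial invariant vectors. Since the $K$-spans of vectors in the image of $\xi_v$ have dimension bounded purely in terms of $\xi_v$, the constant $C$ on the right depends only on $\xi$. Taking the product over the (finitely many) places where $\xi_v$ is non-trivial, and using $\xi_v = \xi_{K_v}$ at the remaining places (where the matrix coefficients are trivially controlled by $\Xi$), one obtains the desired uniform bound with a global constant $C_\xi$.

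The main obstacle will be ensuring uniformity across the continuous family of automorphic representations appearing in the spectral expansion and across the associated Eisenstein series data. The key observation to overcome this is that the idempotent $\xi$ simultaneously controls both the level at finite places and the admissible $K_\infty$-type at infinite places, so that the dimensions that enter Oh's constant are capped by quantities depending only on $\xi$. A secondary technical point is handling the induced representation: the induction from $M_P$ to $G$ can a priori inflate the dimensions of $K$-isotypic components, but Frobenius reciprocity combined with the fact that $\xi_W$ picks out only finitely many $K_\infty$-types bounds this inflation in a manner independent of $\pi$.
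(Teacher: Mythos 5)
Your proposal takes the same route as the paper, which gives no independent argument and simply invokes Lemma 4.7 of Loughran--Takloo-Bighash--Tanimoto: view $\xi * E(\cdot,\phi)$ through the induced representation so that its values become matrix coefficients against vectors in the fixed finite-dimensional $\xi$-isotypic space $H_P(\pi)_{\chi,K_0,W}$, then apply Oh's uniform pointwise bounds relative to the strongly orthogonal systems $\mathcal{S}_v$, with the dimensions entering the constant controlled by $\xi$ alone (yielding $C_\xi$, the $\sqrt{\dim}$ factor, and the $\max|E(e,\phi)|$ term). Since that is precisely the cited proof's mechanism, your sketch is consistent with the paper's treatment.
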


$$|M_\xi(g,\phi)| \leq C_\xi \sqrt{\text{dim}H_P(\pi)_{\chi,K_0,W}} \cdot \text{max}_{\phi \in B_P(\pi)_\chi \cap H_P(\pi)_{\chi,K_0,W}}\{|E(e,\phi)|\} \cdot \prod_v \xi_{\mathcal{S}_v}(g_v).$$

\begin{proposition}
Let $r$ denote the rank of $G$. Given $\epsilon>0$, there is a constant $C_{\xi,\epsilon}$, depending only on $\epsilon$ and the idempotent $\xi$, such that

\end{proposition}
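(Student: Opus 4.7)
The plan is to combine the bound on $M_\xi(g,\phi)$ from the previous lemma with a factorization of the height integral into local integrals, and then apply uniform matrix coefficient bounds due to Oh at the archimedean places to obtain the required polynomial control on the spectral parameter. The global height $H(\textbf{s},g)^{-1}$ and the integrality factor $\delta_{S,D}(g)$ are both pure tensors over the places, so substituting the estimate
$$|M_\xi(g,\phi)| \leq C_\xi \sqrt{\dim H_P(\pi)_{\chi,K_0,W}} \cdot \max_{\phi \in B_P(\pi)_\chi \cap H_P(\pi)_{\chi,K_0,W}}|E(e,\phi)| \cdot \prod_v \xi_{\mathcal{S}_v}(g_v)$$
into the integral $\int_{G(\mathbb{A})} \delta_{S,D}(g) H(\textbf{s},g)^{-1} M_\xi(g,\phi) \, dg$ reduces matters to bounding the adelic integral $\int_{G(\mathbb{A})} \delta_{S,D}(g) H(\textbf{s},g)^{-1} \prod_v \xi_{\mathcal{S}_v}(g_v)\, dg$, uniformly in $\textbf{s}$ in a region slightly to the left of the domain of absolute convergence.

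First I would factorize this adelic integral as a product of local integrals. For $v \notin S_D \cup S_F$, the Cartan decomposition and the explicit form of $H_v(\textbf{s},g_v)$ in equation (3) reduce the local integral to a geometric series of the same shape as in the one-dimensional case (Theorem 3.3), with the Harish-Chandra $\Xi$-function-type damping $\xi_{\mathcal{S}_v}(g_v)$ contributing an extra factor $q_v^{-\tau \langle \rho, a\rangle_v}$ for some $\tau>0$ by Oh's strong orthogonality estimates; the resulting product converges absolutely on $\mathcal{T}^D_{-1/2+\epsilon}$ exactly as in the computation of $\sum b_v(\textbf{s})$ done earlier, and this is where the rank $\geq 2$ hypothesis enters through the quantitative exponent $\tau$. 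At the finitely many remaining places (those in $S_D \cup S_F$ together with the archimedean ones) the local integral is finite, and I would use the polynomial decay of $\xi_{\mathcal{S}_v}$ away from the identity together with Oh's uniform bounds to obtain a local factor that is bounded independently of $\pi$ on the relevant tube domain.

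To get the dependence on $\epsilon$ explicit in the constant $C_{\xi,\epsilon}$, I would track the exponents in the geometric series bounds place by place, using the already-established inequalities of the form $\sum_v q_v^{-3/2} < \infty$ from the proof of Theorem 3.3, and absorb all uniform constants into $C_{\xi,\epsilon}$. The remaining work is to propagate the bound through the Eisenstein series machinery: since $\phi$ ranges over the orthonormal basis $B_P(\pi)_\chi$ and we have $K_\infty$-type $W$ fixed by $\xi$, the factor $\sqrt{\dim H_P(\pi)_{\chi,K_0,W}}$ is controlled polynomially in the Casimir eigenvalue, which is the standard mechanism by which the spectral integral $\int_{\Pi(M_P)} \cdots \, d\pi$ in equation (1) converges.

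The main obstacle I expect is the uniformity at the archimedean places, specifically showing that the polynomial decay in the imaginary direction of $\zeta \in i\mathfrak{a}_P^*$ needed to integrate over $\Pi(M_P)$ is not destroyed by the height factor $H_\infty(\textbf{s},g_\infty)^{-1}$. This requires a delicate integration by parts argument using the differential operator $\Delta = C_G - 2C_K$ introduced in section 2.3.1: since $(1+\Delta)^n$ acts on Eisenstein series with eigenvalue a polynomial in the Casimir eigenvalue of $\pi$, and since $H_\infty(\textbf{s},\cdot)^{-1}$ is smooth with derivatives controlled by $H_\infty(\text{Re}(\textbf{s}),\cdot)^{-1}$ up to constants depending only on $\textbf{s}$ and on the differential operator, one gets the desired polynomial decay by choosing $n$ sufficiently large. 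Together, these estimates yield the uniform bound claimed in the proposition.
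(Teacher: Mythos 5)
There is a genuine mismatch between what you prove and what the proposition asserts. The displayed inequality attached to this proposition (it sits immediately after the statement in the source) is a \emph{pointwise} bound on $M_\xi(g,\phi)$: it upgrades the previous lemma by replacing the factor $\prod_v \xi_{\mathcal{S}_v}(g_v)$ with the explicit decay $\prod_v\prod_{\theta_v\in\Delta(G_{F_v},S_v)}|\theta_v(t_v)|_v^{-l_v(\theta)/(2r)+\epsilon}$, with a constant $C_{\xi,\epsilon}$ depending only on $\xi$ and $\epsilon$ (uniform in $g$, $\pi$, $\phi$). No integration over $G(\mathbb{A})$, no Euler products, and no spectral integral over $\Pi(M_P)$ enter at this stage. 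Your proposal instead sketches bounds for the adelic integral $\int_{G(\mathbb{A})}\delta_{S,D}(g)H(\mathbf{s},g)^{-1}M_\xi(g,\phi)\,dg$, convergence of the local and infinite products, and decay in the spectral parameter via $(1+\Delta)^n$ -- that is the content of the subsequent propositions and corollary (and of the final proposition on $S^\flat$, handled in the paper by citing Loughran--Takloo-Bighash--Tanimoto), not of this one. The constants your argument would produce depend on $\mathbf{s}$ and on compact subsets of the tube domain, which is not the claim here.

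The missing idea is short: for each place $v$ and each simple restricted root $\theta\in\Delta(G_{F_v},S_v)$, the singleton $\{\theta\}$ is a strongly orthogonal system, so Oh's uniform pointwise bound (Theorem 5.9(3) of the cited paper of Oh) gives, for every $\epsilon>0$, a constant $C_\epsilon$ with $\xi_{\mathcal{S}_v}(a)\le C_\epsilon\,|\theta(a)|_v^{-1/2+r\epsilon}$ for all $a\in S_v(F_v)^+$; this is applied at \emph{all} places, not only the archimedean ones as you suggest. Multiplying these $r$ inequalities over the simple roots and taking the $r$-th root yields the exponent $-1/(2r)+\epsilon$ per root (the rank enters exactly here, through the averaging, not through a convergence exponent $\tau$ in an Euler product), and combining with the previous lemma's bound $|M_\xi(g,\phi)|\le C_\xi\sqrt{\dim H_P(\pi)_{\chi,K_0,W}}\cdot\max|E(e,\phi)|\cdot\prod_v\xi_{\mathcal{S}_v}(g_v)$ finishes the proof. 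Your later estimates are needed in the paper, but only after this pointwise bound is in hand; as written, your argument does not establish the stated inequality.
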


$$|M_\xi(g,\phi)| \leq C_{\xi,\epsilon} \sqrt{\text{dim}H_P(\pi)_{\chi,K_0,W}} \cdot \text{max}_{\phi \in B_P(\pi)_\chi \cap H_P(\pi)_{\chi,K_0,W}}\{|E(e,\phi)|\}$$

$$\cdot \prod_v \prod_{\theta_v \in \Delta(G_{F_v},S_v)}|\theta_v(t_v)|_v^{-l_v(\theta)/(2r)+\epsilon}.$$

\begin{proof}
First, we fix a place $v$ of $F$. Each root $\theta \in \Delta(G_{F_v},S_v)$ forms a strongly orthogonal system. By Theorem 5.9(3) of \cite{oh}, for every $\epsilon>0$, there is a constant $C_\epsilon$ such that for every $a \in S_v^+$, 

$$\xi_\mathcal{S}(a) \leq C_\epsilon |\theta(a)|_v^{-1/2+r\epsilon}$$

Multiplying these inequalities over all simple roots of $G$ over $E$  and taking the $r$th root gives the result.
\end{proof}

Let $\varphi$ be an automorphic form of $G(\mathbb{A})$ in the space of an automorphic representation $\pi$ which is right invariant under the maximal compact subgroup $K$. We must bound the infinite product $\prod_v I_v(\textbf{s},\varphi)$, where

$$I_v(\textbf{s},\varphi)=\int_{G(F_v)}\delta_{S,D}(g_v)H_v(\textbf{s},g_v)^{-1}\prod_{\theta_v \in \Delta(G_{F_v},S_v)}|\theta_v(t_v)|_v^{-l_v(\theta_v)/(2r)+\epsilon} \ dg_v$$

\subsubsection{Local integrals.}

\begin{proposition}
For all $v \notin S_\infty$, the integral $I_v(\textbf{s},\varphi)$ is holomorphic for $\textbf{s} \in \mathcal{T}_{-1-1/(2r)}.$ Moreover, for all $\epsilon>0$ there is a constant $C_v(\epsilon)$ such that $|I_v(\textbf{s},\varphi)| \leq C_v(\epsilon)$ for all $\textbf{s} \in \mathcal{T}_{-1-1/(2r)+\epsilon}$.

(2) For $v \in S_\infty$ and $\partial$ in the universal enveloping algebra the integral

$$I_{v,\partial}(\textbf{s},\varphi)=\int_{G(F_v)}\partial(H_v(\textbf{s},g_v)^{-1})\prod_{\theta_v \in \Delta(G_{F_v},S_v)}|\theta_v(t_v)|_v^{-l_v(\theta_v)/(2r)+\epsilon}\ dg_v$$

is holomorphic for $\textbf{s} \in \mathcal{T}_{-1-1/(2r)}.$ Moreover, for all $\epsilon>0$ there is a constant $C_v(\partial,\epsilon)$ such that $|I_{v,\partial}(\textbf{s},\varphi_{\pi_v})| \leq C_v(\partial,\epsilon)$ for all $\textbf{s} \in \mathcal{T}_{-1-1/(2r)+\epsilon}$.
\end{proposition}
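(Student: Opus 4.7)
The plan is to adapt the Cartan-decomposition argument used for the one-dimensional character integrals in the preceding theorem to the present integral $I_v(\textbf{s},\varphi)$, with the Oh-type factor $\prod_{\theta_v}|\theta_v(t_v)|_v^{-l_v(\theta_v)/(2r)+\epsilon}$ supplying exactly the $1/(2r)$-shift that enlarges the convergence domain from the natural integral-point strip to $\mathcal{T}_{-1-1/(2r)}$.

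For part (1), fix a non-archimedean $v$. Each of $\delta_{S,D}$, $H_v(\textbf{s},\cdot)^{-1}$, and the Oh product is bi-$K_v$-invariant, so the Bruhat-Tits theorem collapses $I_v(\textbf{s},\varphi)$ to a sum over $(\textbf{a},d)\in\mathcal{T}(\mathbb{N})\times\Omega_v$. Substituting formula~(3) for $H_v$ and the explicit expression for $\delta_v$ from Section~3.2, every $\textbf{a}$-term decouples across $\theta_v\in\Delta(G_{F_v},S_v)$; those $\theta_v$ arising from roots in $\mathcal{A}_D$ have $a_{\theta_v}\equiv 0$ thanks to $\delta_v$. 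As in the proof of the earlier theorem, I would split
\[
\mathrm{vol}(K_v t_v(\textbf{a})K_v)=\delta_{B_v}(t_v(\textbf{a}))+\bigl(\mathrm{vol}(K_v t_v(\textbf{a})K_v)-\delta_{B_v}(t_v(\textbf{a}))\bigr).
\]
The main piece factorises as a product, over $\theta_v$ not coming from $\mathcal{A}_D$, of geometric series with common ratio $q_v^{-l_v(\theta_v)[\mathrm{Re}(s_{\theta_v})+1/(2r)-\epsilon]+\kappa_{\theta_v}}$, which is $<1$ precisely on $\mathcal{T}_{-1-1/(2r)+\epsilon}$; this delivers holomorphy and the bound $C_v(\epsilon)$. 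The tail from the second piece is controlled by the same $O(q_v^{-3/2})$ estimate used in the proof of the infinite-product theorem. For the finitely many $v\in S_F$ an analogous analysis applies with $\delta_v$ replaced by the characteristic function of $\mathcal{U}(\mathcal{O}_v)$; this is bounded and supported on a region where the sum is already absolutely convergent.

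For part (2), fix an archimedean $v$. Using $G(F_v)=K_v S_v(F_v)^+ K_v$ and the real Cartan Jacobian, the change of variables $x_{\theta_v}=\log|\theta_v(t_v)|_v$ transforms $I_{v,\partial}(\textbf{s},\varphi)$ into an integral of the form
\[
\int_{(\mathbb{R}_{\ge 0})^{\Delta(G_{F_v},S_v)}} P_\partial(\textbf{s},\textbf{x})\exp\Bigl(-\sum_{\theta_v}x_{\theta_v}\bigl[l_v(\theta_v)s_{\theta_v}+l_v(\theta_v)/(2r)-\epsilon-\kappa_{\theta_v}\bigr]\Bigr)d\textbf{x},
\]
where $P_\partial$ is a polynomial in $\textbf{s}$ and $\textbf{x}$ arising from the Cartan Jacobian (bounded by its leading exponential) and from the action of $\partial$ on $H_v^{-1}$. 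Each one-dimensional factor has the form $\int_0^\infty x^k e^{-cx}dx=k!/c^{k+1}$ with $c>0$ throughout $\mathcal{T}_{-1-1/(2r)+\epsilon}$, which yields holomorphy and the required uniform bound $C_v(\partial,\epsilon)$.

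I expect the main obstacle to be the archimedean case: one must carefully verify that $\partial\in U(\mathfrak{g})$, when applied to the bi-$K_v$-invariant function $H_v(\textbf{s},\cdot)^{-1}$ and transferred along the Cartan decomposition, produces only polynomial growth in $x_{\theta_v}$ with at most polynomial dependence on $\textbf{s}$. This amounts to decomposing $\partial$ using $\mathfrak{g}=\mathfrak{n}\oplus\mathfrak{a}\oplus\mathfrak{k}$, observing that the $\mathfrak{k}$ and $\mathfrak{n}$ pieces contribute trivially against the bi-$K_\infty$-invariant character $H_v^{-1}$, and tracking the factor polynomial in $\textbf{s}$ produced by the $\mathfrak{a}$-piece. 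The resulting polynomial behavior is absorbed into $C_v(\partial,\epsilon)$ via the standard moment bound on the Laplace integral.
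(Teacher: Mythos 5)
Your part (1) is essentially the paper's own argument: after adjusting the local metric so that $H_v$ is bi-$K_v$-invariant, the paper majorizes the integral via the Cartan decomposition by a product over $\theta_v\in\Delta(G_{F_v},S_v)$ of geometric series with ratio $q_v^{-(\sigma_{\theta_v}-\kappa_{\theta_v}+1/(2r)-\epsilon)l_v(\theta_v)}$, which is exactly your main term (the paper simply bounds $\mathrm{vol}(K_vt_v(\mathbf{a})K_v)\ll\delta_{B_v}(t_v(\mathbf{a}))$ rather than splitting off an error term; for a single fixed $v$ no $q_v^{-3/2}$ summability is needed). Apart from a small bookkeeping slip (in the paper's normalization $\kappa_{\theta_v}$ enters multiplied by $l_v(\theta_v)$, matching the definition of $\mathcal{T}_{-1-1/(2r)}$), this part is fine.

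The genuine gap is in part (2), precisely at the step you yourself flag: the claim that, after decomposing $\partial\in U(\mathfrak{g})$ along $\mathfrak{g}=\mathfrak{n}\oplus\mathfrak{a}\oplus\mathfrak{k}$, ``the $\mathfrak{k}$ and $\mathfrak{n}$ pieces contribute trivially against the bi-$K_\infty$-invariant character $H_v^{-1}$.'' Right $K_v$-invariance does kill $\mathfrak{k}$-factors placed on the right in a PBW ordering, but nothing kills the $\mathfrak{n}$-factors: $H_v(\mathbf{s},\cdot)^{-1}$ is not a character of $G(F_v)$ and is not left $N$-invariant, so $\mathfrak{n}$-derivatives are generically nonzero (already for $SL_2(\mathbb{R})$ and $H_v(g)=\|g\|^{s}$ one has $X H_v^{-1}\neq 0$ for nilpotent $X$). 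Reducing instead to Harish-Chandra radial components does not rescue the computation as stated, since those have coefficients singular along the walls of $S_v(F_v)^+$, where your Laplace-integral factorization breaks down. The standard repair, which is what the paper (following \cite{JAMS} and \cite{CLT}) implicitly relies on, is to work on the compactification: the metrics are smooth adelic metrics on $X$, the $G\times G$-action extends to $X$, so left- and right-invariant vector fields extend to vector fields on $X(F_v)$ tangent to the boundary divisors, giving a bound of the form $|\partial H_v(\mathbf{s},g)^{-1}|\ll_{\partial}(1+\|\mathbf{s}\|)^{\deg\partial}\,H_v(\mathrm{Re}\,\mathbf{s},g)^{-1}$ uniformly in $g$. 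With that estimate in hand, part (2) reduces to the archimedean analogue of part (1), and your change of variables and the elementary bound $\int_0^\infty x^k e^{-cx}\,dx=k!/c^{k+1}$ finish the proof; without it, the asserted polynomial control of $\partial(H_v^{-1})$ is unjustified.
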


\begin{proof}
We will only prove the first part; the second part is similar. Locally, every two local integral structures give rise to essentially equivalent height functions; so, we replace the local integral structure so that the resulting height function is invariant under $K_v$, a good maximal compact subgroup. Let $\underline{\sigma}$ be the vector consisting of the real parts of the components of $\textbf{s}.$ The local height integral is majorized by

$$\prod_{\theta_v \in \Delta(G_{F_v},S_v)}\sum_{l=0}^\infty \delta_{B_v}(\theta_v^\vee(\varpi_v^l))H(\underline{\sigma},\theta_v^\vee(\varpi_v^l))^{-1}q_v^{-(1/(2r)-\epsilon)l}$$

$$\prod_{\theta_v \in \Delta(G_{F_v},S_v)}\sum_{l=0}^\infty q_v^{-(\sigma_{\theta_v}-\kappa_{\theta_v}+1/(2r)-\epsilon)ll_v(\theta_v)}.$$

\end{proof}



\subsubsection{Infinite product.}

\begin{proposition}

The infinite product

\begin{equation}
I_{S,D}(\textbf{s},\varphi)=\prod_{v \notin S_F \cup S_D} I_v(\textbf{s},\varphi)
\end{equation}
    
is holomorphic for $\textbf{s} \in \mathcal{T}^D_{-1/(2r)}$. Moreover, for all $\epsilon>0$ and all compact subsets $\underline{K} \subset \mathcal{T}^D_{-1/(2r)+\epsilon}$ there exists a constant $C(\epsilon,\underline{K})$, independent of $\pi$, such that for all $\textbf{s} \in \underline{K}$,

\begin{equation}
|I_{S,D}(\textbf{s},\varphi)| \leq C(\epsilon,\underline{K}).
\end{equation}
\end{proposition}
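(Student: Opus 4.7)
The plan is to reduce, via the Cartan decomposition at each $v \notin S_F \cup S_D$, the local integral $I_v(\textbf{s},\varphi)$ to an explicit sum over the positive cocharacter cone, then compare the resulting Euler product with shifted Dedekind zeta functions, in close analogy with the proof of Theorem 2. First I would use $G(F_v) = K_v S_v(F_v)^+ K_v$ together with the bi-$K_v$-invariance of $\delta_v$ and $H_v$ to rewrite
$$I_v(\textbf{s},\varphi) = \sum_{\textbf{a} \in \mathcal{T}(\mathbb{N})} \delta_v(t_v(\textbf{a}))\, H_v(\textbf{s},t_v(\textbf{a}))^{-1} \prod_{\theta_v} q_v^{a_\theta(-l_v(\theta)/(2r)+\epsilon)}\, \mathrm{vol}(K_v t_v(\textbf{a}) K_v),$$
then split $\mathrm{vol}(K_v t_v(\textbf{a}) K_v) = \delta_{B_v}(t_v(\textbf{a})) + r_v(\textbf{a})$ exactly as in the proof of Theorem 2, obtaining $I_v = M_v + b_v$ with $M_v$ factorizing as a product over simple restricted roots and $b_v$ contributing an overall factor of order $q_v^{-1}$.

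The main term $M_v$ becomes a product of geometric series (the factor $\delta_v$ forcing $a_\theta = 0$ for $\theta \in \mathcal{A}_{D,v}$), yielding
$$M_v = \prod_{\theta \notin \mathcal{A}_{D,v}} \bigl(1 - q_v^{-(s_\theta - \kappa_\theta + 1/(2r) - \epsilon) l_v(\theta)}\bigr)^{-1}.$$
Regrouping $\prod_{v} M_v$ by Galois orbits $\mathfrak{O}$ of $\Delta(G_E,T_E) \setminus \mathcal{A}_D$, via Proposition 2.9 of \cite{JAMS}, identifies this product with the partial Dedekind zeta functions $\zeta_{E_\mathfrak{O}}^{S_\mathfrak{O}}(s_\mathfrak{O} - \kappa_\mathfrak{O} + 1/(2r) - \epsilon)$, each holomorphic and uniformly bounded on compact subsets of $\{\mathrm{Re}(s) > 1\}$. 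Under the substitution this translates precisely to the domain $\mathcal{T}^D_{-1/(2r)+\epsilon}$, giving the required uniform estimate. The error contribution is handled as at the end of the proof of Theorem 2: using $|r_v(\textbf{a})| \ll q_v^{-1} \delta_{B_v}(t_v(\textbf{a}))$, one obtains $\sum_v |b_v(\textbf{s})| \ll \sum_v q_v^{-3/2} < \infty$, and a uniform lower bound $|M_v| \geq c > 0$ then shows $\prod_v (1 + b_v/M_v)$ converges absolutely to a bounded holomorphic function on $\mathcal{T}^D_{-1/(2r)+\epsilon}$.

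For holomorphy on the full set $\mathcal{T}^D_{-1/(2r)} = \bigcup_{\epsilon>0} \mathcal{T}^D_{-1/(2r)+\epsilon}$ I would invoke the standard principle that a function holomorphic on each member of an increasing open cover is holomorphic on the union. The independence of the bounding constant from $\varphi$ is built into the setup: after the uniform Oh-type matrix coefficient bound that was used to define $I_v(\textbf{s},\varphi)$, the integrand depends on $\varphi$ only through the exponent $-l_v(\theta)/(2r) + \epsilon$, so the constant depends only on $\epsilon$ and the compact set $\underline{K}$. The hardest step, in my view, is the careful Galois-orbit bookkeeping needed to pass from the product over places $v$ to a product over orbits $\mathfrak{O}$ in a manner that simultaneously handles ramified $v$ and places where $G$ fails to be quasi-split; there the Cartan decomposition has a nontrivial $\Omega_v$, and the exact relationship between the multiplicities $l_v(\theta)$ and the residue degrees of the extension $E/F$ must be pinned down in order to match the local factors of the shifted Dedekind zeta functions.
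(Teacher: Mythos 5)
Your proposal follows essentially the same route as the paper's proof: reduce each good local factor via the Cartan decomposition to a sum over the cocharacter cone, let $\delta_v$ kill the $\mathcal{A}_D$-directions, use $\mathrm{vol}(K_v t_v(\textbf{a})K_v)\asymp\delta_{B_v}(t_v(\textbf{a}))$, and conclude convergence and uniform boundedness on $\mathcal{T}^D_{-1/(2r)+\epsilon}$ because the Oh-type decay shifts the abscissa by $1/(2r)-\epsilon$ (uniformity in $\pi$ being automatic since the majorant no longer involves $\varphi$). The paper's own argument stops at this majorization by an explicit convergent double sum and never splits off a main term, so your additional identification of the majorant with shifted Dedekind zeta functions via Proposition 2.9 of \cite{JAMS} is a harmless refinement (mirroring the proof of Theorem 3 for characters) rather than a genuinely different method.
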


\begin{proof}

For each vector $\textbf{a}=(a_\alpha)_\alpha \in \mathcal{T}(\mathbb{N})$, we set

\[t_v(\textbf{a})=\prod_{\theta_v \in \Delta(G_{F_v},S_v)}\theta_v^\vee(\varpi_v)^{a_{\theta_v}}.\]

Let $\epsilon>0$. $I_{S,D}(\textbf{s},\varphi)$ is bounded by

\[\prod_{v \notin S_D \cup S_F} \sum_\textbf{a} \delta_v(t_v(\textbf{a}))\delta_{B_v}(t_v(\textbf{a}))\prod_{\theta_v \in \Delta(G_{F_v},S_v)} (|\theta_v(t_v(\textbf{a}))|_v^{-l_v(\theta)/2r+\epsilon-l_v(\theta)s_{\theta_v}}).\]


Therefore, to establish the convergence of the Euler product over places $v \notin S_D \cup S_F$ it suffices to bound

\[\sum_{v \notin S_D \cup S_F}\sum_{(a_\alpha) \in \mathbb{N}^{\mathcal{A} \backslash \mathcal{A}_D}} q_v^{-\sum_{\theta \notin \mathcal{A}_{D_v}} a_{\theta_v}[(s_{\theta_v}-\kappa_{\theta_v}+1/(2r))l_v(\theta_v)-\epsilon]}.\]

\end{proof}

\begin{corollary}
$I_{S,D}(\textbf{s},\varphi)$ has an analytic continuation to a function which is holomorphic on $\mathcal{T}^D_{-1/2} \cap \mathcal{T}_{-1-1/(2r)}$. Suppose $\varphi$ is an eigenfunction for $\Delta$. Define $\Lambda(\varphi)$ by $\Delta \varphi = \Lambda(\varphi) \varphi$. Then for each integer $k>0$, all $\epsilon>0$, and every compact subset $\underline{K} \subset \mathcal{T}^D_{-1/(2r)+\epsilon} \cap \mathcal{T}_{-1-1/(2r)+\epsilon},$ there exists a constant $C=C(\epsilon,\underline{K},k)$, independent of $\phi$, such that for all $\textbf{s} \in \underline{K}$,

\begin{equation}
|I_{S,D}(\textbf{s},\varphi)| \leq C \Lambda(\varphi)^{-k}|\varphi(e)|.
\end{equation}

\end{corollary}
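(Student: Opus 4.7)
The plan is to combine the Euler-product bound of the preceding proposition with two additional ingredients: factoring out Hecke $L$-functions to enlarge the domain of holomorphy, and an integration-by-parts argument that exploits the eigenvalue equation for $\Delta$ to gain polynomial decay in $\Lambda(\varphi)$.

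For the analytic continuation, I would follow the factorisation strategy used in the proof of Theorem 3. At each place $v \notin S_F \cup S_D$, the local factor $I_v(\mathbf{s}, \varphi)$ expands via the Cartan decomposition and the volume formula for $K_v t_v(\mathbf{a}) K_v$ into a leading geometric part, whose Euler product over all such $v$ assembles into a product of shifted Hecke $L$-functions $L^{S_{\mathfrak{O}}}\bigl(s_{\mathfrak{O}} - \kappa_{\mathfrak{O}} + \tfrac{l(\mathfrak{O})}{2r} - \epsilon,\, \xi_{\mathfrak{O}}(\chi)\bigr)$, plus an error contribution whose Euler product is absolutely convergent on a strictly larger tube. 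Since the Hecke $L$-functions extend meromorphically to $\mathbb{C}$ and are holomorphic on the half-plane encoded by $\mathcal{T}^D_{-1/2}$, dividing out these $L$-factors produces a function holomorphic on $\mathcal{T}^D_{-1/2} \cap \mathcal{T}_{-1-1/(2r)}$, which gives the first claim.

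For the bound involving $\Lambda(\varphi)^{-k}$, I would use the eigenfunction identity $\varphi = \Lambda(\varphi)^{-k}\,\Delta^k \varphi$ to rewrite the integral representation of $I_{S,D}(\mathbf{s}, \varphi)$ as
\[
I_{S,D}(\mathbf{s}, \varphi) \;=\; \Lambda(\varphi)^{-k} \int_{G(\mathbb{A})} \delta_{S,D}(g)\, H(\mathbf{s}, g)^{-1}\,(\Delta^k \varphi)(g)\, dg.
\]
Since $\Delta$ is essentially self-adjoint, integrating by parts at the archimedean places transfers $\Delta^k$ onto $H(\mathbf{s}, g)^{-1}$. The resulting archimedean integrals are exactly of the type $I_{v, \partial}(\mathbf{s}, \varphi)$ treated in Proposition 4(2), with $\partial$ ranging over operators of degree at most $2k$ in $U(\mathfrak{g})$; these are uniformly bounded on the compact set $\underline{K} \subset \mathcal{T}_{-1-1/(2r)+\epsilon}$ by a constant depending only on $k$, $\epsilon$, and $\underline{K}$. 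The non-archimedean factors are controlled uniformly by the preceding proposition on $\mathcal{T}^D_{-1/(2r)+\epsilon}$, and the residual evaluation of the automorphic form produces the factor $|\varphi(e)|$ via the bound on $M_\xi$ in the lemma above, using that $\varphi$ is $K$-invariant so its sup norm on the compact support of $\xi$ is controlled by $|\varphi(e)|$.

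The main obstacle is making the integration-by-parts rigorous and tracking that the constant is genuinely independent of $\pi$ and $\varphi$. One must check that no boundary terms appear (which follows from the rapid decay of $H^{-1}$ on the relevant Siegel domain) and that the operator norms arising from moving $\Delta^k$ across the product measure on $G(\mathbb{A})$ are absorbed into the Proposition 4(2) estimate uniformly in the automorphic datum. Once this is in place, combining the uniform bound $|I_{S,D}(\mathbf{s}, \varphi)| \leq C\,\Lambda(\varphi)^{-k}\,|\varphi(e)|$ with the analytic continuation from the $L$-function factorisation yields both assertions of the corollary.
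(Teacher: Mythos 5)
The paper gives no argument for this corollary, so the comparison is with the proof it implicitly intends (the argument of \cite{JAMS}, Section 9, and \cite{loughran}): combine the two preceding propositions for the holomorphy, and get the decay in the eigenvalue by writing $\varphi=\Lambda(\varphi)^{-k}\Delta^{k}\varphi$, moving $\Delta^{k}$ onto the height by self-adjointness at the archimedean places, bounding the resulting archimedean integrals by part (2) of the proposition on the local integrals $I_{v,\partial}$, and extracting $|\varphi(e)|$ from Oh's pointwise bound on bi-$K$-invariant matrix coefficients (the lemma on $M_\xi$). Your second half is exactly this, and the uniformity in $\pi$ is correctly attributed to the fact that all constants there depend only on $\epsilon$, $\underline{K}$, $k$ and the idempotent; so that portion is sound in outline.

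The gap is in your continuation step. For the infinite-dimensional part there is no automorphic character in the integrand: $I_v(\textbf{s},\varphi)$ is the \emph{positive} majorant coming from Oh's bound, so the Euler product over $v\notin S_F\cup S_D$ is governed by shifted Dedekind zeta functions $\zeta_{E_{\mathfrak{O}}}^{S_{\mathfrak{O}}}\bigl(s_{\mathfrak{O}}-\kappa_{\mathfrak{O}}+\tfrac{1}{2r}-\epsilon\bigr)$ (trivial character), not by twisted Hecke $L$-functions $L(\cdot,\xi_{\mathfrak{O}}(\chi))$. Your assertion that these factors are holomorphic on the region corresponding to $\mathcal{T}^D_{-1/2}$ is false: the zeta factor has a pole where its argument equals $1$, i.e.\ on the hyperplane $\mathrm{Re}(s_{\mathfrak{O}})=\kappa_{\mathfrak{O}}+1-\tfrac{1}{2r}+\epsilon$, which lies inside $\mathcal{T}^D_{-1/2}$ once $r\geq 2$; so the factorization yields only a meromorphic continuation there. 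Worse, since the local factors have nonnegative coefficients, a Landau-type argument shows the product genuinely is singular at the edge of its tube of absolute convergence, so holomorphy on $\mathcal{T}^D_{-1/2}$ cannot be rescued by any rearrangement of this majorant: the domain that is actually attainable, and the only one used later (the proposition asserting that $S^\flat$ is holomorphic on $\mathcal{T}^D_{-1/2r}\cap\mathcal{T}_{-1-1/2r}$), is $\mathcal{T}^D_{-1/(2r)}\cap\mathcal{T}_{-1-1/(2r)}$; the $-1/2$ in the statement should be read as $-1/(2r)$. With that reading your $L$-function detour is unnecessary as well as unavailable: the continuation is immediate by combining the proposition on the local integrals (holomorphic and bounded on $\mathcal{T}_{-1-1/(2r)+\epsilon}$, finitely many places) with the proposition on the infinite product (holomorphic on $\mathcal{T}^D_{-1/(2r)+\epsilon}$, uniformly in $\pi$), and the real content of the corollary is the bound $C\,\Lambda(\varphi)^{-k}|\varphi(e)|$, which your integration-by-parts argument delivers.
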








The following proposition shows that infinite-dimensional representations will never contribute to the right-most pole of the height zeta function.

\begin{proposition} The function $S^\flat$ admits an analytic continuation to a function which is holomorphic on $\mathcal{T}^D_{-1/2r} \cap \mathcal{T}_{-1-1/2r}$, where $r$ is the rank of $G$.
\end{proposition}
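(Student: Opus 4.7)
My plan is to deduce the claim from the bounds established in the preceding propositions and corollary by showing that the spectral expression defining $S^\flat$ is absolutely convergent on compact subsets of $\mathcal{T}^D_{-1/2r} \cap \mathcal{T}_{-1-1/2r}$. Written out explicitly,
\[ S^\flat(\textbf{s}) = \sum_{\chi \in \mathfrak{X}}^\flat \sum_P \frac{1}{n(A_P)} \int_{\Pi(M_P)} \sum_{\phi \in B_P(\pi)_\chi} E(e,\phi) J_{S,D}(\textbf{s},\phi) \, d\pi_P, \]
where $J_{S,D}(\textbf{s},\phi) = \int_{G(\mathbb{A})} \delta_{S,D}(g) H(\textbf{s},g)^{-1} \overline{E(g,\phi)} \, dg$. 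Once each $J_{S,D}$ is controlled uniformly and the outer summation and integration can be justified, holomorphy is inherited from the integrand.

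First I would bound the inner integral using the Proposition on $M_\xi$ combined with the idempotent decomposition $\sum_W \xi_W \otimes \xi_0$ from Section 3.4. This majorizes $|J_{S,D}(\textbf{s},\phi)|$ by $\sqrt{\dim H_P(\pi)_{\chi,K_0,W}} \cdot \max_\phi |E(e,\phi)|$ times a product of local integrals with the extra $\prod_v \prod_{\theta_v} |\theta_v(t_v)|_v^{-l_v(\theta_v)/(2r)+\epsilon}$ factor. The product over $v \notin S_D \cup S_F$ is precisely $I_{S,D}(\textbf{s},\varphi)$, handled by the preceding proposition; the places in $S_D \setminus S_F$ are handled by Proposition 2; and the places in $S_F$ are handled by the Chambert-Loir--Tschinkel analysis of Section 3.3.3. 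All of these contributions are holomorphic and uniformly bounded on compacta of $\mathcal{T}^D_{-1/2r} \cap \mathcal{T}_{-1-1/2r}$.

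Next I would choose $B_P(\pi)_\chi$ to be compatible with the joint $K_0 \times K_\infty$-isotypic decomposition, so that each $\phi$ is a Laplacian eigenfunction, and apply the Corollary. This replaces the bound on $I_{S,D}(\textbf{s},\varphi)$ by one of the shape $C \Lambda(\phi)^{-k} |\phi(e)|$, for any integer $k > 0$ and uniformly for $\textbf{s}$ in any compact $\underline{K} \subset \mathcal{T}^D_{-1/2r+\epsilon} \cap \mathcal{T}_{-1-1/2r+\epsilon}$. It then remains to prove convergence of
\[ \sum_{\chi, P} \int_{\Pi(M_P)} \sum_W \dim H_P(\pi)_{\chi,K_0,W} \cdot \sup_{\phi} |E(e,\phi)|^2 \cdot \Lambda_W(\pi)^{-k} \, d\pi, \]
where $\Lambda_W(\pi)$ denotes the $\Delta$-eigenvalue on $H_P(\pi)_{\chi,K_0,W}$. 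The key inputs are Weyl-law polynomial control of the number of $K_\infty$-types with bounded Casimir eigenvalue, polynomial bounds on the Plancherel density on $i\mathfrak{a}_P^*$, and Müller-type polynomial bounds on $|E(e,\phi)|^2$ in the spectral parameter. Taking $k$ large enough relative to the rank $r$ and the combined polynomial degrees gives an absolutely convergent series, hence holomorphy of $S^\flat$ on the claimed region.

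The principal obstacle is the last step: securing a polynomial bound on $\sup_\phi |E(e,\phi)|^2$ that is uniform across the continuous spectrum of $M_P(\mathbb{A})^1$ and across the associated classes $\mathcal{P}_\chi$, and then checking that it interacts correctly with the Plancherel density. The $k \to \infty$ freedom from the Laplacian eigenvalue decay absorbs any fixed polynomial growth, so the real work is ensuring such a bound exists and is uniform enough to be summed. The hypothesis that $G$ has rank at least two, used to invoke Oh's uniform matrix-coefficient estimates in the preceding proposition, is what makes this strategy viable; once the polynomial Eisenstein bound is in place, absolute convergence and hence holomorphy of $S^\flat$ on $\mathcal{T}^D_{-1/2r} \cap \mathcal{T}_{-1-1/2r}$ follow immediately.
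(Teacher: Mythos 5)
Your reduction is the same as the paper's: after the idempotent decomposition, the $M_\xi$ bound via Oh's matrix-coefficient estimates, the local and Euler-product estimates, and the Laplacian-eigenvalue decay from the Corollary, everything comes down to the convergence, for $k$ large, of a spectral sum of the shape
\[
\sum_{\chi}^\flat \sum_P \frac{1}{n(A_P)}\sum_{W \in \hat{K}_\infty}\int_{\Pi(M_P)} \dim H_P(\pi)_{\chi,K_0,W}\cdot \Lambda^{-k}\cdot \sup_{\phi \in B_P(\pi)_\chi \cap H_P(\pi)_{\chi,K_0,W}}|E(e,\phi)|^2 \, d\pi .
\]
Up to that point your argument is fine and matches the structure of the text. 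The gap is in how you propose to finish: the paper (following its sources) establishes this convergence by citing the proof of Theorem 4.10 of \cite{loughran}, where the spectrally weighted sums of $|E(e,\phi)|^2$ are controlled by an automorphic-kernel (pre-trace--formula type) argument --- the sum $\sum_\phi \Lambda(\phi)^{-k}|E(e,\phi)|^2$, taken over $K_0$-invariant, $W$-isotypic vectors, is dominated by the diagonal value of the kernel of a positive integral operator built from $(1+\Delta)^{-k}$ and the idempotent $\xi$, which is finite and summable over $W$, $\pi$, $P$, $\chi$ for $k$ large. You instead invoke ``Müller-type polynomial bounds on $|E(e,\phi)|^2$'' uniform in the cuspidal datum and the continuous parameter, together with Weyl-law and Plancherel-density estimates, and you yourself flag that securing such a uniform pointwise bound is the ``real work.'' That is exactly the point: individual polynomial bounds on values of cuspidal Eisenstein series at a fixed point, uniform across all $\chi$, all associated parabolics, and the whole unitary parameter space, are not available off the shelf in this generality, and no argument for them is given. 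As written, the crucial convergence --- which is the entire analytic content of the proposition beyond bookkeeping --- is asserted rather than proved.

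To close the gap you should either quote the result you need (the convergence statement inside the proof of Theorem 4.10 of \cite{loughran}, or the analogous estimates in Section 4 of \cite{JAMS}), or reproduce the kernel argument: use that each $\phi$ in your basis is $K_0$-invariant and $K_\infty$-finite, apply $(1+\Delta)^{-k}$ to trade eigenvalue decay for convergence, and bound the resulting weighted sum of $|E(e,\phi)|^2$ by evaluating a positive automorphic kernel on the diagonal, rather than bounding each $E(e,\phi)$ separately. With that substitution the rest of your outline (local holomorphy and uniform bounds on compacta of $\mathcal{T}^D_{-1/(2r)+\epsilon}\cap\mathcal{T}_{-1-1/(2r)+\epsilon}$, then absolute convergence of the outer sum and integral) does yield the stated holomorphy of $S^\flat$.
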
 

\begin{proof}
We need to show the convergence of

$$\sum_{\chi \in \mathfrak{X}}^\flat \sum_P n(A_P)^{-1}\sum_{W \in \hat{K}_\infty}\int_{\Pi(M_P)}(\sum_{\phi \in B_P(\pi)_\chi \cap H_P(\pi)_{\chi,K_0,W}}\Lambda(\phi)^{-r}|E(e,\phi)|\sqrt{\text{dim}H_P(\pi)_{\chi,K_0,W}}$$

$$\times \max_{\phi \in B_P(\pi)_\chi \cap H_P(\pi)_{\chi,K_0,W}}\{|E(e,\phi)|\}) \ d\pi$$

for $r$ large. The proof of this is in the proof of Theorem 4.10 in \cite{loughran}.
\end{proof}

\subsection{Characters that contribute to the leading pole}

We now determine which characters contribute to the main pole. Let $\mathcal{X}_{S,D,\lambda}(G) \subset \mathcal{X}(G)$ be the collection of all characters $\chi=\otimes'_v \chi_v$ such that

- for all $\alpha \in \mathcal{A}(\lambda) - \mathcal{A}_D(\lambda)$ and all $v \notin S$, we have $\xi_\alpha(\chi)_w \equiv 1$; 

- for all $\alpha \in \mathcal{A}_D(\lambda)$ and $v \in S_D \backslash S_D \cap S_F$, we have $\xi_\alpha(\chi)_w \equiv 1$.

- for all $v \in S_D,$ we have $\chi_v|_{D_v(F_v)} \not \equiv 0$.

\subsection{The leading pole} We have shown that

$$Z_{S,D}(s\lambda)=\sum_{\chi \in \mathcal{X}(G)}\int_{G(\mathbb{A})} \delta_{S,D}(g)H(s\lambda,g)^{-1}\chi(g) \ dg + f(s)$$

with $f$ holomorphic for Re($s)>a(\lambda)-\delta$, for some $\delta>0$. For $\chi \in \mathcal{X}(G)$ the integral

\[\int_{G(\mathbb{A})} \delta_{S,D}(g)H_S(s\lambda,g)^{-1}\chi(g) \ dg\]
admits a regularization of the shape

$$\prod_{\alpha \in \mathcal{A}(\lambda)-\mathcal{A}_D(\lambda)}L_S(s\lambda_\alpha-\kappa_\alpha,\xi_\alpha(\chi')) \cdot h_\chi(s) \cdot \prod_{v \in S_D \backslash S_D \cap S_F} \prod_{\alpha \in \mathcal{A}_D(\lambda)}L_v(s\lambda_\alpha-\kappa_\alpha,\xi_{\alpha,v}(\chi'_v)) \cdot h_{\chi,v}(s),$$

with $h_\chi$ and $h_{\chi,v}$ holomorphic for Re($s)>a(\lambda)-\delta,$ for some $\delta>0$. It follows that only $\chi \in \mathcal{X}_{S,D,\lambda}(G)$ contribute to the leading term at $s=a(\lambda)$. We can rewrite this contribution as 

$$|\mathcal{X}_{S,D,\lambda}(G)| \int_{G(\mathbb{A})^{\text{Ker}_\lambda}}\delta_{S,D}(g)H(s\lambda,g)^{-1} \ dg,$$

where 

    $$G(\mathbb{A})^{\text{Ker}_\lambda}=\bigcap_{\chi \in \mathcal{X}_{S,D,\lambda}(G)}\text{Ker}(\chi)$$

is the intersection of the kernels of automorphic characters. Theorem 1 follows exactly as in the proof of Theorem 6.4 in \cite{TTB}.









\subsubsection{Restriction to the anticanonical line.}

We now specialize to the case of the log-anticanonical line bundle. Recall that $D=\bigcup_{\alpha \in \mathcal{A}_D}D_\alpha$. Also, $-K_X=\sum_\alpha (\kappa_\alpha+1)D_\alpha$. Then

$$
\max_{\alpha \in \mathcal{A}_D}\dfrac{\kappa_\alpha}{\lambda_\alpha}=\max_{\alpha \notin \mathcal{A}_D}\dfrac{\kappa_\alpha+1}{\lambda_\alpha}=1.$$

Thus the right-most pole for $-K_X-D$ is at $s=1$. In this case, each place $v \in S$ contributes, to the pole at $s=1$, a pole of order $1+ \dim \mathcal{C}^{an}_{F_v}(D)$. The order of the pole at $s=1$ is

$$b_=\mathrm{rank}(\mathrm{Pic}(X \backslash D))+\sum_{v \in S_D}(1+\dim \mathcal{C}_{F_v}^{an}(D)).$$

We describe the constant $c$ appearing in the statement of Theorem 1 in terms of Tamagawa-type constants. For places $v \in S$, assuming that $D(F_v) \neq \emptyset,$ there is a distinguished $v$-adic measure $\tau_v^{\max}$ on $D(F_v)$; we refer to \cite{CLT} for the definition. The corresponding volumes are given by

$$\tau_v^{\max}(D(F_v))=\prod_{\alpha \in \mathcal{A}_D}\dfrac{1}{\kappa_\alpha}\lim_{s \rightarrow 1}(s-1)^d \int_{G(F_v)}H_v(s\lambda,g_v)^{-1} \ dg_v.$$

There is an adelic measure on the integral adeles on $U=X \backslash D$. This measure has the form

$$\tau^S_{(X,D)}(U(\mathbb{A}_S)^{\text{int}})=\prod_{\alpha \notin \mathcal{A}_D} \dfrac{1}{\kappa_\alpha+1} \lim_{s \rightarrow 1}(s-1)^{r-d}\int_{G(\mathbb{A}_S)} \delta_{S}(g)H_S(s\lambda,g_v)^{-1} \ dg.$$

It follows that

$$c=\dfrac{1}{(b-1)!} \cdot |\chi_{S,D,\lambda}(G)| \cdot  \tau_{(X,D)}^S(U(\mathbb{A}_S)^{\text{int}}) \cdot \prod_{v \in S}\tau_v^{\max}(D(F_v)).$$

\section{Acknowledgments.} It is a pleasure to thank my thesis adviser, R. Takloo-Bighash, who first suggested the topic of research to me and provided helpful guidance and encouragement over the years. The author would also like to thank Y. Tschinkel and P. Sarnak for their interest and stimulating comments. Thanks are also due to D. Prasad and S.W. Shin for answering a question the author had about automorphic representations.







\bibliographystyle{amsplain}

\end{document}